\newcommand{\pOt}{{\partial \Omega_t}}
\newcommand{\Ht}{H^{3/2}(S^{n-1})}
\newcommand{\Hp}{H^{1/2}(S^{n-1})}
\newcommand{\Hm}{H^{-1/2}(S^{n-1})}
\newcommand{\Hh}{\Hp \oplus \Hm}
\newcommand{\wtf}{\tilde f}
\newcommand{\wtg}{\tilde g}
\newcommand{\wth}{\tilde h}
\newcommand{\p}{\partial}
\newcommand{\bbC}{{\mathbb{C}}}
\newcommand{\bbN}{{\mathbb{N}}}
\newcommand{\bbR}{{\mathbb{R}}}
\newcommand{\bbZ}{{\mathbb{Z}}}
\newcommand{\cD}{{\mathcal D}}
\newcommand{\cH}{{\mathcal H}}
\DeclareMathOperator{\Tr}{Tr}
\newcommand{\beq}{\begin{equation}}
\newcommand{\enq}{\end{equation}}
\begin{document}

\title*{Exponential dichotomies for elliptic PDE on radial domains}
\author{M. Beck, G. Cox, C. Jones, Y. Latushkin and A. Sukhtayev}
\institute{Margaret Beck \at Department of Mathematics and Statistics, Boston University, Boston, MA 02215, USA, \email{mabeck@bu.edu}
\and Graham Cox \at Department of Mathematics and Statistics, Memorial University of Newfoundland, St. John's, NL A1C 5S7, Canada, \email{gcox@mun.ca}
\and Christopher Jones \at Department of Mathematics, University of North Carolina at Chapel Hill, Chapel Hill, NC 27599, USA, \email{ckrtj@email.unc.edu}
\and Yuri Latushkin \at Department of Mathematics, University of Missouri, Columbia, MO 65211, USA, \email{latushkiny@missouri.edu}
\and Alim Sukhtayev \at Department of Mathematics, Miami University, Oxford, OH 45056, USA, \email{sukhtaa@miamioh.edu}
}
%
%
\maketitle

\abstract{It was recently shown by the authors that a semilinear elliptic equation can be represented as an infinite-dimensional dynamical system in terms of boundary data on a shrinking one-parameter family of domains. The resulting system is ill-posed, in the sense that solutions do not typically exist forward or backward in time. In this paper we consider a radial family of domains and prove that the linearized system admits an exponential dichotomy, with the unstable subspace corresponding to the boundary data of weak solutions to the linear PDE. This generalizes the spatial dynamics approach, which applies to infinite cylindrical (channel) domains, and also generalizes previous work on radial domains as we impose no symmetry assumptions on the equation or its solutions.}

\section{Introduction}
The fundamental idea of spatial dynamics is to write a partial differential equation on a cylindrical domain $\Omega = \bbR \times \Omega' \subset \bbR^n$ as an ordinary differential equation with respect to the longitudinal variable $x \in \bbR$. For instance, $\Delta u + F(x,y,u) = 0$ becomes
\[
	\frac{du}{dx} = v, \quad \frac{dv}{dx} = -F(x,y,u) - \Delta_{\Omega'} u,
\]
where $(x,y) \in \bbR \times \Omega'$ and $\Delta_{\Omega'}$ denotes the Laplacian on the cross-section $\Omega' \subset \bbR^{n-1}$. This idea first appeared in \cite{K82}; see also \cite{A84,BSZ10,DSSS09,G86,LP08,M86,PSS97,S02,SS01,S03} and references therein.

In \cite{BCJLS2} we extended this ODE--PDE correspondence to semi-linear elliptic equations on bounded domains. Assuming $\Omega$ is smoothly deformed through a one-parameter family $\Omega_t$, we obtain a dynamical system satisfied by the boundary data of solutions to $\Delta u + F(x,u) = 0$ on $\pOt$.

In the current paper we start to investigate the application of dynamical systems methodology to the resulting system of equations, which we call the Spatial Evolutionary System (SES). In particular, we construct an exponential dichotomy, and prove that the unstable subspace coincides with the space of boundary data for weak solutions to the PDE.

Our results are valid for systems of equations; functions are thus assumed to take values in $\bbC^N$ unless stated otherwise. We abbreviate $H^s(S^{n-1};\bbC^N) = H^s(S^{n-1})$ etc.

Suppose $u$ is a smooth solution to the linear elliptic system
\begin{align}\label{PDE}
	\Delta u = V u
\end{align}
on $\bbR^n$, where $V$ is an $N\times N$ matrix-valued function.  Writing $u = u(r,\theta)$ in terms of generalized polar coordinates $(r,\theta) \in (0,\infty) \times S^{n-1}$, we define the functions
\begin{align*}
	f(t) := u(t,\cdot), \quad
	g(t) := \frac{\p u}{\p r}(t,\cdot),
\end{align*}
which are in $C^\infty(S^{n-1})$ for $t>0$, and combine these to form the trace
\begin{equation}
    \Tr_t u := (f(t),g(t)).
\end{equation}
Using the fact that
\[
\Delta u = \frac{\p^2 u}{\p r^2} + \frac{n-1}{r} \frac{\p u}{\p r} + \frac{1}{r^2} \Delta_{S^{n-1}} u,
\]
a direct computation shows that for all $t>0$, $f$ and $g$ satisfy the linear system
\begin{align}\label{SES}
	\frac{d}{dt} \begin{pmatrix} f \\ g \end{pmatrix} = 
	\begin{pmatrix} 0 & 1 \\ V_t - t^{-2} \Delta_{S^{n-1}} \ \ & \ \ -(n-1) t^{-1} \end{pmatrix} \begin{pmatrix} f \\ g \end{pmatrix},
\end{align}
where $V_t := V(t,\cdot)$ and $\Delta_{S^{n-1}}$ is the Laplace--Beltrami operator on the sphere.  

In \cite{BCJLS2} it was shown that the equivalence between \eqref{PDE} and \eqref{SES} extends to  weak $H^1$ solutions. To state this precisely, consider the Hilbert spaces
\begin{align*}
	\cH = \Hp \oplus \Hm, 
	\quad \cH^1 = \Ht \oplus \Hp.
\end{align*}
The results can then be summarized as follows, where $B_T$ denotes the open ball of radius $T$.

\begin{theorem}\label{thm:ODEPDE}
Let $u \in H^1(B_T)$ be a weak solution to \eqref{PDE} for some $T>0$. Then $(f(t),g(t)) = \Tr_t u$ satisfies the regularity conditions
\begin{align}\label{fgreg}
	(f,g) \in C^0\big((0,T), \cH^1\big) \cap C^1\big((0,T),\cH\big) \cap C^0\big((0,T],\cH\big),
\end{align}
solves \eqref{SES} for $0 < t < T$, and has $\|f(t)\|_{\Hp} + \|g(t)\|_{\Hm}$ bounded near $t=0$.

On the other hand, if $(f,g)$ satisfies \eqref{fgreg}, solves \eqref{SES} for $0 < t < T$, and has $t^p \|f(t)\|_{\Hp} + t^{n-p-1} \|g(t)\|_{\Hm}$ bounded near $t=0$ for some $p \in (0,n/2)$, then there exists a weak solution $u \in H^1(B_T)$ to \eqref{PDE} with $\Tr_t u = (f(t),g(t))$ for all $t \in (0,T)$.
\end{theorem}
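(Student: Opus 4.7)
The strategy is to treat the two directions separately, in each case reducing to the annulus $A_\varepsilon := B_T\setminus\overline{B_\varepsilon}$ (where $u$ is automatically classical enough to manipulate) and then sending $\varepsilon\to 0^+$.

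\emph{Forward direction.} Since $Vu\in L^2(B_T)$ we have $\Delta u\in L^2(B_T)$, so standard interior elliptic regularity yields $u\in H^2_{\mathrm{loc}}(B_T\setminus\{0\})$. The traces $f(t),g(t)$ are therefore classical on $S^{n-1}$ for every $t>0$, and the regularity \eqref{fgreg} is inherited from that of $u$ on compact subannuli; equation \eqref{SES} follows by separating the radial and tangential parts of the polar Laplacian. The delicate step is the bound on $\|f(t)\|_{\Hp}+\|g(t)\|_{\Hm}$ as $t\to 0^+$. My plan here is to dualize: test against $\varphi\in\Hp$, extend it to an $H^1$ function $\Phi$ on the ball $B_t$ whose norms scale in $t$ in a way matched to the Sobolev orders $\pm 1/2$, and use the weak formulation of the PDE inside $B_t$ to express the boundary pairing as a volume integral
$$\langle g(t),\varphi\rangle = \int_{B_t}\nabla u\cdot\nabla\bar\Phi\,dx + \int_{B_t}Vu\,\bar\Phi\,dx,$$
whose right side vanishes with the $H^1$ mass of $u$ on $B_t$; the corresponding estimate for $f(t)$ follows from a trace inequality on the rescaled annulus $A_{1,2}$ applied to $u(ty)$.

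\emph{Reverse direction.} Define $u(r\theta):=f(r)(\theta)$ on $B_T\setminus\{0\}$; the SES read in polar coordinates is exactly $\Delta u=Vu$ classically on this set. Polar decomposition of the $H^1$ norm gives
$$\|u\|_{H^1(A_\varepsilon)}^2 = \int_\varepsilon^T r^{n-1}\bigl(\|f(r)\|_{L^2(S^{n-1})}^2 + \|g(r)\|_{L^2(S^{n-1})}^2 + r^{-2}\|\nabla_{S^{n-1}}f(r)\|_{L^2(S^{n-1})}^2\bigr)\,dr,$$
which must be shown to remain bounded as $\varepsilon\to 0$, so that $u\in H^1(B_T)$ (the singleton $\{0\}$ being $H^1$-removable). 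With that in hand, the weak formulation on all of $B_T$ follows by testing against $\varphi\in C_c^\infty(B_T)$, integrating by parts on $A_\varepsilon$, and letting $\varepsilon\to 0$; the boundary contributions on $\partial B_\varepsilon$ are of the form $\varepsilon^{n-1}\langle g(\varepsilon),\varphi\rangle_{\Hm,\Hp}$ and $\varepsilon^{n-1}\langle f(\varepsilon),\p_r\varphi\rangle_{\Hp,\Hm}$, both of which vanish thanks to the growth hypothesis, since $\varepsilon^{n-1}\|g(\varepsilon)\|_{\Hm}=O(\varepsilon^p)$ and $\varepsilon^{n-1}\|f(\varepsilon)\|_{\Hp}=O(\varepsilon^{n-p-1})$ with $p\in(0,n/2)$.

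\emph{Main obstacle.} The hardest step is the $H^1$-bound in the reverse direction, because $g$ is only assumed to lie in $\Hm$, not in $L^2$, so the integral $\int r^{n-1}\|g(r)\|_{L^2}^2\,dr$ is not directly controlled. My plan is to replace the $L^2$-integral of $g$ by an $\Hm$--$\Hp$ duality pairing: use the SES to write $g=df/dt$ plus lower-order terms that can be reabsorbed, integrate by parts in $r$ on $[\varepsilon,T]$, and express the resulting quantities as pairings between $f\in\Hp$ and terms in $\Hm$. The algebraic identity $p+(n-p-1)=n-1$ is exactly what allows the polar weight $r^{n-1}$ to split between the two dual factors, while $p\in(0,n/2)$ is the precise range in which both factors are integrable down to $r=0$.
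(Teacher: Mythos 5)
The paper does not actually prove Theorem~\ref{thm:ODEPDE}; it states the result and explicitly cites \cite{BCJLS2} for the proof. So there is no proof in the present text to compare against, and the assessment below concerns only the internal soundness of your plan.

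\emph{Forward direction.} Your duality identity for $\langle g(t),\varphi\rangle$ is the right idea, but as written it omits the Jacobian factor relating the surface measure on $\partial B_t$ to that on $S^{n-1}$, and the conclusion ``the right side vanishes with the $H^1$ mass of $u$ on $B_t$'' is not enough. Tracking the scaling of an $H^1$-bounded extension $\Phi$ of $\varphi\in\Hp$ into $B_t$ gives $\|\nabla\Phi\|_{L^2(B_t)}\sim t^{(n-2)/2}\|\varphi\|_{\Hp}$, and combined with the $t^{1-n}$ Jacobian prefactor one arrives at $\|g(t)\|_{\Hm}\lesssim t^{-n/2}\|u\|_{H^1(B_t)}$. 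This is the estimate the paper itself quotes from \cite{BCJLS2} in the proof of the Liouville corollary, and it does \emph{not} give boundedness by itself. You need the extra input that $\|u\|_{H^1(B_t)}=O(t^{n/2})$, which requires elliptic regularity (Schauder, using $V\in C^{0,\gamma}$) to conclude that $u$ and $\nabla u$ are bounded near the origin. That step is missing from your sketch.

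\emph{Reverse direction.} You correctly flag that $g(r)\in\Hm$ does not control $\|g(r)\|_{L^2}$, but you do not flag the parallel (and equally essential) fact that $f(r)\in\Hp$ does not control $\|\nabla_{S^{n-1}}f(r)\|_{L^2}$, so the third term $\int r^{n-3}\|\nabla_{S^{n-1}}f(r)\|^2_{L^2}\,dr$ in your polar decomposition is \emph{also} not directly bounded by the hypothesis. That said, the integration-by-parts plan you outline, if executed carefully, actually rescues both terms at once: writing $g=df/dr$ and integrating by parts, then substituting the second SES equation for $\partial_r g$, yields the identity
\begin{equation*}
\int_\varepsilon^T r^{n-1}\|g\|_{L^2}^2\,dr + \int_\varepsilon^T r^{n-3}\|\nabla_{S^{n-1}}f\|_{L^2}^2\,dr = \Bigl[r^{n-1}\langle g,f\rangle\Bigr]_\varepsilon^T - \int_\varepsilon^T r^{n-1}\langle V_r f,f\rangle\,dr,
\end{equation*}
in which the boundary term at $r=\varepsilon$ is $O(\varepsilon^{n-1}\cdot\varepsilon^{-(n-p-1)}\cdot\varepsilon^{-p})=O(1)$ and the potential term is bounded by $\int r^{n-1-2p}\,dr<\infty$ precisely because $p<n/2$. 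So the plan works, but only because of this cancellation that your sketch does not identify; as stated, you would run into an apparent dead end at the angular-gradient term before discovering that it is absorbed by the same integration by parts you propose for $g$. I would recommend making this coupling explicit; otherwise the reader cannot see why the $\Hp$ control on $f$ suffices.

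One small correction: the first row of \eqref{SES} gives $g=df/dt$ \emph{exactly}, with no lower-order terms; your phrase ``plus lower-order terms that can be reabsorbed'' is misleading there, though the lower-order terms do appear when you differentiate $g$.
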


This equivalence also extends to semilinear equations on non-radial domains; see \cite{BCJLS2} for the general statement.

The system \eqref{SES} is ill-posed, in the sense that solutions do not necessarily exist forward (or backward) in time for given initial (or terminal) data at prescribed at time $t_0 > 0$. However, we will prove that $\cH$ splits into two infinite-dimensional subspaces for which the system admits solutions forward and backward in time, respectively. This property is described using the language of exponential dichotomies. The system \eqref{SES} does not admit an exponential dichotomy in the strict sense. Rather, a dichotomy exists for a suitably rescaled and reparameterized system of equations. 

We let $t = e^\tau$ and then define
\begin{align}\label{scaling}
	\wtf(\tau) = e^{\alpha\tau} f(e^\tau), \quad \wtg(\tau) = e^{(1+\alpha)\tau} g(e^\tau)
\end{align}
for some constant $\alpha$ to be determined. A direct computation shows that if $(f,g)$ solves \eqref{SES}, then
\begin{align}\label{RSES}
	\frac{d}{d\tau} \begin{pmatrix} \wtf \\ \wtg \end{pmatrix} = 
	\begin{pmatrix} \alpha & 1\\ e^{2\tau} V_{e^\tau} - \Delta_{S^{n-1}}  \ \ & \ \ \alpha + 2 - n \end{pmatrix} \begin{pmatrix} \wtf \\ \wtg \end{pmatrix}
\end{align}
for all $\tau \in \bbR$. For convenience we let $\wth = (\wtf,\wtg)$.

Our main result is that \eqref{RSES} has an exponential dichotomy on the half line $(-\infty,0]$ for most values of $\alpha$. Let
\begin{align}\label{Aspec}
	\Sigma(n) = \big((-\infty,2-n] \cup [0,\infty)\big) \cap \bbZ,
\end{align}
so that $\Sigma(2) = \Sigma(3) = \bbZ$, $\Sigma(4) = \bbZ \setminus \{-1\}$, etc. We also define the interpolation spaces
\[
    \cH^\beta = H^{1/2 + \beta}(S^{n-1}) \oplus H^{-1/2 + \beta}(S^{n-1}),
\]
so that $\cH^0 = \cH$ and $\cH^1$ agrees with the definition given above.

\begin{theorem}\label{thm:dichotomy}
If $-\alpha \notin \Sigma(n)$ and $V \in C^{0,\gamma}(B_1)$ for some $\gamma \in (0,1)$, then for each $\beta \in [0,1)$ there exists a H\"older continuous family of projections $P^u\colon (-\infty,0] \to B(\cH^\beta)$ and constants $K, \eta^u, \eta^s > 0$ such that, for every $\tau_0 \leq 0$ and $z \in \cH^\beta$ there exists a solution $\wth^u(\tau;\tau_0,z)$ of \eqref{RSES}, defined for $\tau \leq \tau_0$, such that
\begin{itemize}
	\item $\wth^u(\tau_0;\tau_0,z) = P^u(\tau_0) z$,
	\item $\| \wth^u(\tau; \tau_0,z) \|_{\cH^\beta} \leq K e^{\eta^u(\tau-\tau_0)} \|z\|_{\cH^\beta}$ for all  $\tau \leq \tau_0$,
	\item $\wth^u(\tau; \tau_0,z) \in R(P^u(\tau))$ for all $\tau \leq \tau_0$,
\end{itemize}
and a solution $\wth^s(\tau;\tau_0,z)$ of \eqref{RSES}, defined for $\tau_0 \leq \tau \leq 0$, such that
\begin{itemize}
	\item $\wth^s(\tau_0;\tau_0,z) = P^s(\tau_0) z$,
	\item $\| \wth^s(\tau; \tau_0,z) \|_{\cH^\beta} \leq K e^{\eta^s(\tau_0-\tau)} \|z\|_{\cH^\beta}$ for all $\tau_0 \leq \tau \leq 0$,
	\item $\wth^s(\tau; \tau_0,z) \in R(P^s(\tau))$ for all $\tau_0 \leq \tau \leq 0$,
\end{itemize}
where $P^s(\tau) = I - P^u(\tau)$.
\end{theorem}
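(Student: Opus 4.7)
My approach is to treat \eqref{RSES} as a H\"older-continuous, exponentially small perturbation at $\tau = -\infty$ of its autonomous limit. I write $\wth' = A_\infty \wth + B(\tau)\wth$ with
\[
A_\infty := \begin{pmatrix} \alpha & 1 \\ -\Delta_{S^{n-1}} & \alpha+2-n\end{pmatrix}, \qquad B(\tau) := \begin{pmatrix} 0 & 0 \\ e^{2\tau} V_{e^\tau} & 0\end{pmatrix},
\]
so that $\|B(\tau)\|_{\cH^\beta\to\cH^\beta} \lesssim e^{2\tau}$, while $\tau \mapsto B(\tau)$ is H\"older-$\gamma$ on compact subsets of $(-\infty,0]$.

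The first step is the spectral analysis of $A_\infty$. In the spherical-harmonic decomposition, where $-\Delta_{S^{n-1}}$ has eigenvalues $\lambda_k = k(k+n-2)$, $A_\infty$ becomes block-diagonal with $2\times 2$ blocks whose characteristic polynomial has discriminant $(n-2)^2 + 4\lambda_k = (2k+n-2)^2$, giving eigenvalues $\alpha + k$ and $\alpha + 2 - n - k$ with eigenvectors $(1,k)^\top$ and $(1, 2-n-k)^\top$. Hence $\sigma(A_\infty) = \alpha + \Sigma(n)$, and $-\alpha \notin \Sigma(n)$ is precisely hyperbolicity with uniform gap $\eta := \dist(-\alpha, \Sigma(n))$. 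Summing the mode-wise spectral projections onto the positive eigenvalues defines $P^u_\infty$, and I verify it is bounded on $\cH^\beta$ for each $\beta \in [0,1)$: the entries of $P^u_k$ pair compatibly with the weights $k^{\pm 1/2+\beta}$ that define the fractional norm on the two components of $\cH^\beta$. Accordingly $A_\infty$ generates a backward semiflow $T^u_\infty(\tau)$ on $R(P^u_\infty)$ for $\tau \leq 0$ and a forward semiflow $T^s_\infty(\tau)$ on $R(P^s_\infty)$ for $\tau \geq 0$, both satisfying $\|T^{u,s}_\infty(\tau)\| \lesssim e^{-\eta|\tau|}$.

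Second, I build the perturbed dichotomy by a Lyapunov--Perron fixed point. For $\tau_* \ll 0$ fixed and $w \in R(P^u_\infty)$, I seek $\wth^u \in C((-\infty,\tau_*], \cH^\beta)$ with $\sup_{\tau\leq\tau_*} e^{-\eta(\tau-\tau_*)}\|\wth^u(\tau)\|_{\cH^\beta} < \infty$ solving
\begin{align*}
\wth^u(\tau) &= T^u_\infty(\tau-\tau_*)\, w - \int_\tau^{\tau_*} T^u_\infty(\tau-s)\, P^u_\infty\, B(s)\wth^u(s)\,ds \\
&\qquad + \int_{-\infty}^\tau T^s_\infty(\tau-s)\, P^s_\infty\, B(s)\wth^u(s)\,ds.
\end{align*}
The bound $\|B(s)\| \lesssim e^{2s}$ together with the dichotomy estimates for $T^{u,s}_\infty$ makes the right-hand side a contraction once $e^{2\tau_*}$ is small enough, and setting $R(P^u(\tau_*)) := \{\wth^u(\tau_*) : w \in R(P^u_\infty)\}$ realizes the perturbed unstable subspace at $\tau_*$ as a graph over $R(P^u_\infty)$; a symmetric backward-integration argument produces $R(P^s(\tau_*))$ and the stable solutions. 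For $\tau_0 \in (\tau_*, 0]$ the dichotomy is then transported by a standard propagation step---extending $\wth^u$ forward mode by mode on the finite interval $[\tau_*, \tau_0]$ (each mode is a well-posed $2\times 2$ linear ODE), and using the dichotomy bounds on $(-\infty,\tau_*]$ together with Gr\"onwall to control the aggregate $\cH^\beta$ norm. H\"older continuity of $\tau_0 \mapsto P^u(\tau_0)$ is inherited from the H\"older dependence of the parameter $B$ in the contraction plus smoothness of the finite-interval flow.

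The main obstacle is the ill-posedness of the ambient system: both positive and negative parts of $\sigma(A_\infty)$ are unbounded in modulus, so neither direction of propagation of $A_\infty$ on $\cH^\beta$ is defined as a $C_0$-semigroup, and off-the-shelf roughness theorems for exponential dichotomies do not apply. I therefore work with the two one-sided restricted semiflows $T^u_\infty$ and $T^s_\infty$ separately, orienting every integral in the variation-of-constants equation toward the time direction in which the relevant semiflow is bounded. The most delicate technical point is verifying that the infinite sum of mode-wise eigenprojections assembles into an operator bounded on every $\cH^\beta$ with $\beta \in [0,1)$: the entries of the $2\times 2$ block $P^u_k$ pair precisely with the different fractional weights $k^{1/2+\beta}$ and $k^{-1/2+\beta}$ on the two components of $\cH^\beta$, and it is this matching---rather than any literal decoupling of \eqref{RSES} in spherical harmonics---that propagates the dichotomy through the entire interpolation scale.
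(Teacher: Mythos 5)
Your decomposition $A_\infty + B(\tau)$ and the mode-wise spectral analysis of $A_\infty$ match the paper (these are Lemmas \ref{lem:compact} and \ref{lem:spectrum}, and your boundedness check of $P^u_\infty$ on $\cH^\beta$ is essentially correct). But where the paper then verifies the hypotheses of \cite[Theorem 1]{PSS97} and invokes that result, you instead re-derive a dichotomy from scratch by Lyapunov--Perron, and two of the steps needed to close that argument are missing or wrong.

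First, your estimate $\|B(\tau)\|_{\cH^\beta\to\cH^\beta}\lesssim e^{2\tau}$ is false in general. Multiplication by the merely H\"older-$\gamma$ potential $V_{e^\tau}$ does not map $H^{1/2+\beta}(S^{n-1})$ boundedly into $H^{-1/2+\beta}(S^{n-1})$ once $\beta$ is too large relative to $\gamma$; what one actually has, and what the paper proves in Lemma \ref{lem:pert}, is $B(\tau)\in B(\cH^\beta,\cH)$. This loss of $\beta$ derivatives is precisely why the Lyapunov--Perron iteration does not close on $\cH^\beta$ without a compensating smoothing estimate for the autonomous flow: one needs the restricted (bi-)semigroups $T^{s,u}_\infty(\sigma)$ to map $\cH$ back into $\cH^\beta$ with an integrable singularity $|\sigma|^{-\beta}$. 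That smoothing is exactly what the resolvent estimate $\|(A-i\mu)^{-1}\|_{B(\cH)}\lesssim (1+|\mu|)^{-1}$ of Lemma \ref{lem:resolvent} delivers, and it is a hypothesis of \cite[Theorem 1]{PSS97}. You never establish it, so your contraction argument is not actually closed for $\beta\in(0,1)$.

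Second, the finite-interval propagation step from $\tau_*$ to $\tau_0$ is not well-formed. The perturbed system \eqref{RSES} does \emph{not} decouple into $2\times2$ blocks: $V(e^\tau,\cdot)$ is an arbitrary (not radially symmetric) potential and hence couples spherical-harmonic modes, so there is no sequence of scalar $2\times2$ ODEs to solve. Even for the unperturbed system, propagating the unstable data forward on $[\tau_*,\tau_0]$ and controlling the $\cH^\beta$ norm by Gr\"onwall fails: the $l$-th mode is amplified by a factor $e^{(\alpha+l)(\tau_0-\tau_*)}$, unbounded in $l$, so the aggregate norm blows up. The correct mechanism in \cite{PSS97} is to run the fixed-point argument with base time $\tau_0$ directly (splitting off the part of $B$ on a compact interval, where it is merely bounded and H\"older, rather than exponentially small), which again leans on the resolvent estimate. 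Finally, you do not address the role of unique continuation (Lemma \ref{lem:UCP}), which the paper verifies as part of the \cite{PSS97} hypotheses and which is what guarantees the dichotomy projections are uniquely determined.
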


We will see below that the exponential dichotomy on $(-\infty,0]$ carries information about bounded solutions to the linear PDE \eqref{PDE} on the unit ball, $B_1$. By the same method we can also obtain an exponential dichotomy on $(-\infty,\log T]$ for any $T>0$, corresponding to the PDE on the ball $B_T$.

The exponential dichotomy can also be described in terms of operators $\Phi^{s}(\tau,\tau_0)$ and $\Phi^{u}(\tau,\tau_0)$, defined by
\begin{align}\label{Phidef}
    \Phi^{s,u}(\tau,\tau_0) z = \wth^{s,u}(\tau;\tau_0,z)
\end{align} 
for $z \in \cH^\beta$, so that $\Phi^{s,u}(\tau_0,\tau_0) = P^{s,u}(\tau_0)$. Note that $\Phi^u(\tau,\tau_0) z$ is defined for $\tau \leq \tau_0 \leq 0$ and $\Phi^s(\tau,\tau_0) z$ is defined for $\tau_0 \leq \tau \leq 0$. From Theorem \ref{thm:dichotomy} we have the estimates
\[
    \| \Phi^u(\tau,\tau_0) z \|_{\cH^\beta} \leq K e^{\eta^u(\tau - \tau_0)} \|z\|_{\cH^\beta}, \quad \tau \leq \tau_0
\]
and
\[
    \| \Phi^s(\tau,\tau_0) z \|_{\cH^\beta} \leq K e^{\eta^s(\tau_0 - \tau)} \|z\|_{\cH^\beta}, \quad \tau_0 \leq \tau \leq 0.
\]

The precise growth and decay rates depend on $\alpha$. We will see below that it is convenient to choose $0 < \alpha < n-2$ (assuming $n>2$), in which case a dichotomy will exist for any numbers $\eta^u$ and $\eta^s$ satisfying $0 \leq \eta^u < \alpha$ and $0 \leq \eta^s < n-2-\alpha$.

To simplify the exposition we now assume $\beta=0$. For any $\tau \leq 0$ we define the unstable subspace $\widetilde E^u(\tau) = R(P^u(\tau))$, and then let
\begin{align}
	E^u(t) = \left\{ \left( t^{-\alpha} \wtf(\log t), t^{-1 - \alpha} \wtg(\log t) \right) : \big(\wtf(\log t), \wtg(\log t) \big) \in \widetilde E^u(\log t) \right\}
\end{align}
for $0 <t \leq 1$.

As in \cite{BCJLS2}, for an appropriate choice of $\alpha$ we have that $E^u(t)$ corresponds to the space of boundary data of weak solutions to \eqref{PDE} on the ball $B_t$. For $t>0$ let
\[
	K_t = \{u \in H^1(B_t) : \Delta u = V u \text{ on } B_t \},
\]
where the equality $\Delta u = V u$ is meant in a distributional sense. Since $K_t$ is a subset of $\{u \in H^1(B_t) : \Delta u \in L^2(B_t)\}$, the trace map $\Tr_t$ can be applied, and we have $\Tr_t u \in \Hh$ for each $u \in K_t$. We thus define
\[
	\Tr_t(K_t) = \{\Tr_t u : u \in K_t\} \subset \cH.
\]

The following result is then an immediate consequence of Theorem \ref{thm:dichotomy} and \cite[Theorem 3.10]{BCJLS2}.

\begin{theorem}\label{thm:unstable}
Assume, in addition to the hypotheses of Theorem \ref{thm:dichotomy}, that $V$ is smooth in a neighborhood of the origin. If 
\begin{align}\label{alpharange}
	-\eta^s < \alpha < \eta^u + \frac{n}{2} - 1,
\end{align}
then $E^u(t) = \Tr_t(K_t)$ for each $t>0$.
\end{theorem}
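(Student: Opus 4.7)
The plan is to prove both inclusions of the set identity by combining Theorem~\ref{thm:ODEPDE} with the dichotomy of Theorem~\ref{thm:dichotomy}, using the scaling \eqref{scaling} and its inverse
\[
    f(s) = s^{-\alpha}\wtf(\log s), \qquad g(s) = s^{-1-\alpha}\wtg(\log s),
\]
to pass between solutions of \eqref{SES} on $(0,t]$ and solutions of \eqref{RSES} on $(-\infty, \log t]$.

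For $\Tr_t(K_t) \subseteq E^u(t)$, I start with $u \in K_t$ and apply the first part of Theorem~\ref{thm:ODEPDE} to obtain $(f,g) = \Tr_\cdot u$ solving \eqref{SES} on $(0,t]$ with $\|f(s)\|_{\Hp}+\|g(s)\|_{\Hm}$ bounded as $s \to 0^+$. Under the scaling, $\wth = (\wtf,\wtg)$ then satisfies
\[
    \|\wth(\tau)\|_{\cH} \leq C\bigl(e^{\alpha\tau} + e^{(1+\alpha)\tau}\bigr)
\]
as $\tau \to -\infty$, so the hypothesis $\alpha > -\eta^s$ yields $e^{\eta^s\tau}\|\wth(\tau)\|_{\cH} \to 0$. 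A standard dichotomy argument then forces $P^s(\log t)\wth(\log t) = 0$: invariance of the stable subspace under $\Phi^s$ gives $P^s(\tau)\wth(\tau) = \Phi^s(\tau,\tau') P^s(\tau')\wth(\tau')$ for $\tau' \leq \tau \leq \log t$, whence the exponential bound for $\Phi^s$ furnishes a lower bound $\|P^s(\tau')\wth(\tau')\|_{\cH} \geq K^{-1} e^{\eta^s(\tau-\tau')}\|P^s(\tau)\wth(\tau)\|_{\cH}$ that, taken with $\tau = \log t$ and letting $\tau' \to -\infty$, contradicts the upper bound above unless $P^s(\log t)\wth(\log t) = 0$. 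Hence $\wth(\log t) \in \widetilde E^u(\log t)$ and $(f(t),g(t)) \in E^u(t)$.

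For $E^u(t) \subseteq \Tr_t(K_t)$, I start from $(f(t),g(t)) \in E^u(t)$, so that $\wth(\log t) \in \widetilde E^u(\log t)$. Theorem~\ref{thm:dichotomy} provides a solution of \eqref{RSES} on $(-\infty, \log t]$ through this datum, satisfying $\|\wth(\tau)\|_{\cH} \leq K e^{\eta^u(\tau - \log t)}\|\wth(\log t)\|_{\cH}$. Undoing the scaling yields a solution $(f,g)$ of \eqref{SES} on $(0,t]$ with
\[
    s^p\|f(s)\|_{\Hp} + s^{n-p-1}\|g(s)\|_{\Hm} \leq C\bigl(s^{p-\alpha+\eta^u} + s^{n-p-2-\alpha+\eta^u}\bigr),
\]
which is bounded near $s = 0$ whenever $\max(0,\alpha-\eta^u) \leq p \leq n-2-\alpha+\eta^u$. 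The upper bound $\alpha < \eta^u + n/2 - 1$ in \eqref{alpharange} is precisely what makes this interval intersect $(0,n/2)$ nontrivially, so an admissible $p$ exists. The second part of Theorem~\ref{thm:ODEPDE} then produces a weak solution $u \in H^1(B_t)$ to \eqref{PDE} with $\Tr_s u = (f(s),g(s))$, so $(f(t),g(t)) \in \Tr_t(K_t)$.

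The main step requiring care is the bookkeeping that aligns the scaling exponent $\alpha$ with the dichotomy rates $\eta^u, \eta^s$ so as to reproduce exactly the sharp admissibility window \eqref{alpharange}; all of the substantive analysis is already built into Theorems~\ref{thm:ODEPDE} and~\ref{thm:dichotomy}, which is why the authors describe Theorem~\ref{thm:unstable} as an immediate consequence of the two.
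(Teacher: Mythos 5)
Your reconstruction is sound in outline and your exponent bookkeeping is correct, but it is worth noting that the paper does not actually prove this statement at all: it simply defers it to \cite[Theorem 3.10]{BCJLS2}, which packages exactly the identification $E^u(t) = \Tr_t(K_t)$ given a dichotomy with rates $\eta^u,\eta^s$ in the window \eqref{alpharange}. You are in effect reproving that cited result by combining Theorem~\ref{thm:ODEPDE} with Theorem~\ref{thm:dichotomy} through the scaling \eqref{scaling}, which is the natural route and is what the cited theorem must itself do.

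Within your argument, however, two steps are glossed over. You never invoke the extra hypothesis that $V$ is smooth near the origin; this is not required by Theorem~\ref{thm:dichotomy}, so it must be doing real work -- most plausibly in securing the behaviour of the trace $\Tr_t u$ as $t \to 0^+$ that the cited \cite[Theorem 3.10]{BCJLS2} relies on -- and a complete proof should say where it enters. More substantively, in the inclusion $E^u(t) \subseteq \Tr_t(K_t)$ you feed the dichotomy solution $\wth^u$ into the second half of Theorem~\ref{thm:ODEPDE}, whose hypotheses include the regularity \eqref{fgreg}, i.e.\ continuity with values in $\cH^1$ and differentiability with values in $\cH$. Theorem~\ref{thm:dichotomy} directly asserts only estimates in $\cH^\beta$ with $\beta<1$, so one must separately argue (e.g.\ via the smoothing built into the variation-of-constants construction in \cite{PSS97}) that the dichotomy solution acquires the required $\cH^1$-valued regularity for $\tau$ strictly less than $\tau_0$; you elide this. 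Finally, the remark that $\alpha < \eta^u + n/2 - 1$ is ``precisely'' what makes an admissible $p$ exist overstates slightly: the sharp threshold for $[\max(0,\alpha-\eta^u),\,n-2-\alpha+\eta^u]$ to meet $(0,n/2)$ is $\alpha \le \eta^u + n/2 - 1$, and when $\alpha \le \eta^u$ even weaker conditions suffice; strict inequality is of course sufficient, which is all the theorem needs.
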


To verify \eqref{alpharange} we must understand the dependence of $\eta^u$ and $\eta^s$ on $\alpha$. When $n>2$ there is always an $\alpha$ for which \eqref{alpharange} is satisfied.

\begin{corollary}\label{cor:alphagap}
If $n>2$ and $0 < \alpha < n - 2$, then $E^u(t) = \Tr_t(K_t)$ for each $t > 0$.
\end{corollary}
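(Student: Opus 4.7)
The plan is to verify that, under the constraint $0 < \alpha < n-2$ with $n > 2$, both Theorem \ref{thm:dichotomy} and Theorem \ref{thm:unstable} apply with an appropriate choice of decay rates, and then simply invoke them. There is no genuine obstacle here; the corollary is really an unpacking of the range of $\alpha$ for which the compatibility condition \eqref{alpharange} can be satisfied.

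First I would check the non-resonance hypothesis of Theorem \ref{thm:dichotomy}. Since $\alpha \in (0, n-2)$, we have $-\alpha \in (2-n, 0)$, an open interval containing no endpoint of $\Sigma(n) = \bigl((-\infty, 2-n] \cup [0,\infty)\bigr) \cap \bbZ$ and disjoint from both rays defining $\Sigma(n)$. Hence $-\alpha \notin \Sigma(n)$, and Theorem \ref{thm:dichotomy} produces an exponential dichotomy for the system \eqref{RSES}. As noted in the paragraph following Theorem \ref{thm:dichotomy}, in this regime the exponents $\eta^u, \eta^s$ may be chosen to be any numbers with
\begin{equation*}
    0 \leq \eta^u < \alpha, \qquad 0 \leq \eta^s < n - 2 - \alpha.
\end{equation*}

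Next I would verify the inequality \eqref{alpharange}, namely $-\eta^s < \alpha < \eta^u + \tfrac{n}{2} - 1$, for some admissible pair $(\eta^u, \eta^s)$. The left-hand inequality is automatic because $\alpha > 0$ while $-\eta^s \leq 0$. For the right-hand inequality, I need $\eta^u > \alpha - \tfrac{n}{2} + 1$, combined with the upper bound $\eta^u < \alpha$. These two constraints are compatible precisely when
\begin{equation*}
    \max\bigl\{0,\, \alpha - \tfrac{n}{2} + 1\bigr\} < \alpha,
\end{equation*}
which reduces to $n > 2$ (and the trivial $\alpha > 0$). Thus, for any $\alpha \in (0, n-2)$, there is a valid choice of $\eta^u$ in the open interval $\bigl(\max\{0, \alpha - \tfrac{n}{2} + 1\},\, \alpha\bigr)$, together with any admissible $\eta^s$.

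Finally, since the hypotheses of Theorem \ref{thm:unstable} are met with the chosen $\alpha$, $\eta^u$, and $\eta^s$, the conclusion $E^u(t) = \Tr_t(K_t)$ for every $t > 0$ follows at once. The only step that requires a bit of care is the admissibility check for $\eta^u$, and this is precisely what forces the hypothesis $n > 2$ (so that the interval for $\eta^u$ has positive length) and the upper bound $\alpha < n - 2$ (so that $\eta^s$ can be chosen non-negative). For $n \leq 2$ no such $\alpha$ exists, which is consistent with the statement of the corollary.
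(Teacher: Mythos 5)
Your proof is correct and follows essentially the same path as the paper's: verify $-\alpha \notin \Sigma(n)$, take the admissible ranges $\eta^u \in [0,\alpha)$ and $\eta^s \in [0, n-2-\alpha)$ coming from the spectral gap of $A$, observe that the left inequality in \eqref{alpharange} is free since $\alpha > 0$, and pick $\eta^u$ in the interval $\bigl(\alpha + 1 - \tfrac{n}{2},\, \alpha\bigr)$, which is nonempty and contains positive numbers precisely because $n > 2$ and $\alpha > 0$. The only (minor) added value in your write-up is making the non-resonance check $-\alpha \notin \Sigma(n)$ explicit, which the paper leaves implicit.
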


On the other hand, no such $\alpha$ exists when $n=2$. This observation, which will be proved in Section \ref{sec:proofs} below, was also made in \cite[Remark 2.1]{BCJLS2}. Below we provide a different (but equivalent) explanation in terms of the spectrum of the limiting (as $\tau \to -\infty$) operator in \eqref{RSES}. For harmonic functions (i.e. when $V=0$) it can be shown that $E^u(t) = \Tr_t(K_t)$ if and only if $0 < \alpha < n-2$. This is proved in Section \ref{sec:harmonic} for $n=3$, and follows from a similar computation for other $n$.

\subsection*{Outline of the paper}
The remainder of the paper is organized as follows. In Section \ref{sec:dichotomy} we construct the half-line exponential dichotomy, proving Theorem \ref{thm:dichotomy} and Corollary \ref{cor:alphagap}. In Section \ref{sec:harmonic} we illustrate our results for the case of harmonic functions in $\bbR^3$, where the dichotomy projections can be found explicitly. Finally, in Section \ref{sec:apply} we use the exponential dichotomy to reformulate a nonlinear elliptic equation as a fixed point problem for an integral equation, and give a dynamical interpretation of a linear eigenvalue problem.

\section{Construction of the exponential dichotomy}\label{sec:dichotomy}
We prove Theorem \ref{thm:dichotomy} using the results of \cite{PSS97}. We start by decomposing the right-hand side of \eqref{RSES} as
\begin{align}\label{Adecomp}
	\begin{pmatrix} \alpha & 1\\ e^{2\tau} V_{e^\tau} - \Delta_{S^{n-1}}  \ \ & \ \ \alpha + 2 - n \end{pmatrix} 
\	&= \underbrace{\begin{pmatrix} \alpha & 1\\ - \Delta_{S^{n-1}}  \ \ & \ \ \alpha + 2 - n \end{pmatrix}}_{A}  + \underbrace{\begin{pmatrix} 0 & 0 \\ e^{2\tau} V_{e^\tau} & 0 \end{pmatrix}}_{B(\tau)}
\end{align}
where $A$ is an unbounded operator on $\cH = \Hp \oplus \Hm$ with domain $\cH^1 = \Ht \oplus \Hm$, and $B(\tau)$ is a bounded operator on $\cH$.

Before proceeding, we remark on the definition of the fractional Sobolev spaces appearing in our analysis. Following \cite{M00}, we define $H^s(S^{n-1})$ through local coordinate charts and a partition of unity. On the other hand, following \cite{S83,T92}, one can also define
\[
   \widetilde H^s = \left\{f \in L^2 : f = (I - \Delta)^{-s/2} g \text{ for some } g \in L^2 \right\}, \quad \|f\|_{\widetilde H^s} = \|g\|_{L^2}
\]
for $s>0$ and 
\begin{align*}
    \widetilde H^s &= \left\{f \in \mathcal D : f = (I - \Delta)^\ell g \text{ for some } g \in \widetilde H^{2\ell+s} \text{ with } \ell \in \bbN \text{ and } 2\ell + s > 0 \right\}, \\
    &\|f\|_{\widetilde H^s} = \|g\|_{\widetilde H^{2\ell+s}}
\end{align*}
for $s<0$, where $\mathcal D$ denotes the space of distributions. In either case we have that
\begin{equation}\label{Hsequiv}
    \|f\|_{\widetilde H^s}^2 = \sum_{k=1}^\infty (1 + \lambda_k)^s |c_k|^2
\end{equation}
where $(\lambda_k,\phi_k)$ are the eigenvalues and eigenfunctions of $-\Delta$ and $c_k = \left<f, \phi_k\right>$. This is equivalent to the local definition (see, for instance, \cite[Theorem 3.9]{GS13}), so we can use the $H^s$ and $\widetilde H^s$ norms interchangeably.

When $s=-\frac12$ we choose $\ell=1$, so that $2\ell + s = \frac32$, and thus obtain $\|f\|_{\widetilde H^{-1/2}} = \| g \|_{\widetilde H^{3/2}}$, where $g \in \widetilde H^{3/2}$ solves $(I - \Delta)g = f$. In particular, this implies
\begin{equation}\label{elliptic}
    \| g \|_{\widetilde H^{3/2}} \leq \| \Delta g\|_{\widetilde H^{-1/2}} + \|g\|_{\widetilde H^{-1/2}}
\end{equation}
for any $g \in \widetilde H^{3/2}$.

\subsection{The limiting operator}
In this section we describe the relevant properties of $A$.

\begin{lemma}\label{lem:compact}
$A$ is a closed operator with compact resolvent.
\end{lemma}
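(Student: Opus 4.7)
My approach is to diagonalize $A$ via spherical harmonics on $S^{n-1}$, reducing it to a direct sum of $2 \times 2$ matrices, and then to invoke the compact Sobolev embedding on the sphere. Let $\{\phi_k\}$ be an orthonormal basis of $L^2(S^{n-1})$ of eigenfunctions of $-\Delta_{S^{n-1}}$, with eigenvalues $\lambda_k \to \infty$. By \eqref{Hsequiv} this basis simultaneously diagonalizes every $H^s(S^{n-1})$, so $\cH$ and $\cH^1$ are realized as weighted $\ell^2$-spaces on the Fourier coefficients of $(f,g)$. The operator $A$ preserves each two-dimensional subspace $V_k := \mathrm{span}\{(\phi_k,0),(0,\phi_k)\}$, acting there as
\[
    A_k = \begin{pmatrix} \alpha & 1 \\ \lambda_k & \alpha + 2 - n \end{pmatrix},
\]
and a uniform-in-$k$ comparison of matrix norms with the weighted norms shows $A\colon \cH^1 \to \cH$ is bounded.

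Next I would locate the spectrum. The eigenvalues of $A_k$ are the roots of $(\alpha - \mu)(\alpha + 2 - n - \mu) - \lambda_k = 0$, which factor cleanly via the identity $(n-2)^2 + 4\lambda_k = (2k_\ast + n - 2)^2$ when $\lambda_k = k_\ast(k_\ast + n - 2)$ (here $k_\ast \geq 0$ is the degree of the spherical harmonic). The roots are $\mu_k^+ = \alpha + k_\ast$ and $\mu_k^- = \alpha + 2 - n - k_\ast$, so $\sigma(A) = \alpha + \Sigma(n)$ is a countable discrete subset of $\bbR$ and $\rho(A) \neq \emptyset$. Fix any $\mu \in \rho(A)$; the resolvent is block-diagonal with blocks
\[
    (A_k - \mu I)^{-1} = \frac{1}{D_k(\mu)} \begin{pmatrix} \alpha + 2 - n - \mu & -1 \\ -\lambda_k & \alpha - \mu \end{pmatrix}, \quad D_k(\mu) = (\alpha - \mu)(\alpha + 2 - n - \mu) - \lambda_k,
\]
and $|D_k(\mu)|$ grows linearly in $\lambda_k$. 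A routine weighted-norm estimate, in which this growth cancels the $\lambda_k$ factor appearing inside $A_k$, shows $(A - \mu I)^{-1}\colon \cH \to \cH^1$ is bounded; this in particular establishes that $A$ is closed.

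Finally, the resolvent on $\cH$ is the composition of $(A - \mu I)^{-1}\colon \cH \to \cH^1$ with the inclusion $\cH^1 \hookrightarrow \cH$, which is compact by Rellich's theorem on the compact manifold $S^{n-1}$ applied componentwise. The main (if still essentially routine) obstacle is the uniform matrix-norm bookkeeping in the previous step; the Sobolev exponents defining $\cH$ and $\cH^1$ are precisely calibrated so that the $\lambda_k$-cancellation inside the resolvent produces an operator that gains one derivative, making the compactness step immediate.
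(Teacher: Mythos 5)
Your proof is correct, but it proceeds by a genuinely different route from the paper. You diagonalize $A$ at the outset by the spherical-harmonic decomposition, obtaining an explicit block-diagonal resolvent and establishing boundedness of $(A-\mu)^{-1}\colon\cH\to\cH^1$ (and hence closedness, nonempty resolvent set, and the full spectrum $\sigma(A)=\alpha+\Sigma(n)$) in one stroke; compactness then drops out of Rellich as you say. The paper instead works directly with the auxiliary operator $A_0 = \begin{pmatrix}0&1\\-\Delta_{S^{n-1}}&0\end{pmatrix}$: it writes $(A_0-i\mu)^{-1}$ in terms of $D(\mu)=-\Delta_{S^{n-1}}+\mu^2$ (without expanding in eigenfunctions), deduces that $\sigma(A_0)\subset\bbR$, observes $A-A_0$ is bounded so $\rho(A)\ne\emptyset$, gets compactness from the embedding $\cH^1\hookrightarrow\cH$, and finally proves closedness of $A_0$ by a separate closed-graph argument using the elliptic estimate \eqref{elliptic}. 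Your approach is more computational but more economical: it merges what the paper treats in two lemmas (closedness/compactness and the spectral identification), at the cost of committing early to the eigenfunction expansion and to the spectral characterization \eqref{Hsequiv} of the $H^s$ norms. The paper's closedness argument via \eqref{elliptic} has the advantage of not depending on explicit diagonalizability and so would transfer to variable-coefficient cross-sectional operators, where your block-diagonal reduction would not be available. One small point you should make explicit: to conclude closedness from the boundedness of $(A-\mu)^{-1}\colon\cH\to\cH^1$ you should note that the block-diagonal formula really is a two-sided inverse of $A-\mu$ on $\cH^1$, which follows from the blockwise identities $(A_k-\mu)(A_k-\mu)^{-1}=(A_k-\mu)^{-1}(A_k-\mu)=I$ together with the fact that the range of your candidate resolvent is exactly $\cH^1$.
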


\begin{proof}
We first prove that the resolvent set of $A$ is nonempty. First consider
\begin{equation}\label{A0def}
	A_0 := \begin{pmatrix} 0 & 1 \\ - \Delta_{S^{n-1}} & 0  \end{pmatrix}.
\end{equation}
A direct computation shows that
\begin{equation}\label{A0resolvent}
	(A_0 - i\mu)^{-1} = \begin{pmatrix} i\mu D(\mu)^{-1} & D(\mu)^{-1} \\
	-\Delta_{S^{n-1}} D(\mu)^{-1} & i\mu D(\mu)^{-1} \end{pmatrix}
\end{equation}
where $D(\mu) := -\Delta_{S^{n-1}} + \mu^2$ is invertible for any $\mu \neq 0$. In particular, this implies the spectrum of $A_0$ is real. Since
\[
	A - A_0 = \begin{pmatrix} \alpha & 0 \\ 0 & 2 + \alpha - n \end{pmatrix}
\]
is a bounded operator on $\cH$, the spectrum of $A$ is contained in a bounded strip around the real axis, and hence the resolvent set is nonempty. The compactness of the resolvent operator now follows from the compactness of the embedding $\cH^1 \hookrightarrow \cH$.

We next prove that $A$ is closed. It suffices to prove that $A_0$ is closed, since $A - A_0$ is bounded. To that end, let $(f_k,g_k)$ be a sequence in $\Ht \oplus \Hp$ such that $(f_k,g_k) \to (f,g)$ in $\cH$
and $A_0(f_k,g_k) \to (F,G)$ in $\cH$. This means $g_k \longrightarrow F$ in $\Hp$ and $-\Delta_{S^{n-1}} f_k  \longrightarrow G$ in $\Hm$. From \eqref{elliptic} we have the estimate
\[
	\|f\|_{\Ht} \leq C \left( \big\| \Delta_{S^{n-1}} f \big\|_{\Hm} + \|f\|_{\Hm} \right)
\]
for all $f \in \Ht$. Since $f_k \to f$ in $\Hp$ and $\Delta_{S^{n-1}} f_k \to - G$ in $\Hm$, the estimate implies that $f_k \to f$ in $\Ht$. Therefore, $(f_k,g_k) \to (f,g)$ in $\Ht \oplus \Hp$, and so $A_0(f_k,g_k) \to A_0(f,g) = (F,G)$ in $\Hp \oplus \Hm$. This completes the proof that $A_0$ (and hence $A$) is closed.
\end{proof}

We now compute the spectrum of $A$.

\begin{lemma}\label{lem:spectrum}
The spectrum of $A$ is $\alpha + \Sigma(n)$, where $\Sigma(n)$ is the set defined in \eqref{Aspec}.
\end{lemma}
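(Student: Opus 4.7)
The plan is to exploit the fact that $-\Delta_{S^{n-1}}$ has a complete orthonormal system of eigenfunctions to block-diagonalize $A$. Let $\{\phi_k\}$ be an orthonormal basis of $L^2(S^{n-1})$ consisting of spherical harmonics, with $-\Delta_{S^{n-1}} \phi_k = \lambda_k \phi_k$ where $\lambda_k = k(k+n-2)$ for $k = 0, 1, 2, \ldots$ (suppressing the multiplicity index within each eigenspace). By \eqref{Hsequiv}, the space $\cH = \Hp \oplus \Hm$ decomposes as an orthogonal Hilbert sum $\bigoplus_k W_k$, where $W_k$ is the span of all pairs of the form $(a\phi_k, b\phi_k)$, and $A$ preserves each $W_k$. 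On $W_k$ the operator $A$ acts in the coordinates $(a,b)$ as the $2 \times 2$ matrix
$$
A_k = \begin{pmatrix} \alpha & 1 \\ \lambda_k & \alpha + 2 - n \end{pmatrix}.
$$

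Next I would compute the eigenvalues of each $A_k$. Its characteristic polynomial is $(\alpha - \mu)(\alpha + 2 - n - \mu) - \lambda_k$; writing $\nu = \mu - \alpha$ reduces this to $\nu^2 + (n-2)\nu - \lambda_k = 0$. Substituting $\lambda_k = k(k+n-2)$ makes the discriminant $(2k+n-2)^2$, so the roots are $\nu = k$ and $\nu = -(k+n-2)$, yielding eigenvalues
$$
\mu = \alpha + k \quad \text{and} \quad \mu = \alpha + (2 - n - k).
$$
As $k$ ranges over $\{0, 1, 2, \ldots\}$, these values sweep out $\alpha + \bigl([0,\infty) \cap \bbZ\bigr) \cup \alpha + \bigl((-\infty, 2-n] \cap \bbZ\bigr) = \alpha + \Sigma(n)$.

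Finally I would argue that the spectrum of $A$ coincides with the union of the spectra of the blocks $A_k$. Every $\mu \in \alpha + \Sigma(n)$ is an eigenvalue of $A$: a nonzero vector in $\ker(A_k - \mu)$, embedded into $W_k \subset \cH^1$, is an eigenvector of $A$. For the converse, since $A$ has compact resolvent by Lemma \ref{lem:compact} the spectrum is discrete, so it suffices to produce a bounded inverse $(A - \mu)^{-1}$ on $\cH$ for every $\mu \notin \alpha + \Sigma(n)$. This is done block-by-block using the explicit inverses $(A_k - \mu)^{-1}$. The technical point, which I expect to be the main obstacle, is showing that these blockwise inverses assemble to a bounded operator on $\cH$; this requires a uniform-in-$k$ bound on $(A_k - \mu)^{-1}$ in the weighted norm inherited from $\cH$, because the two components of $W_k$ carry weights $(1+\lambda_k)^{\pm 1/2}$ and one must track these through the explicit formula for $(A_k - \mu)^{-1}$. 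The required uniform bound holds because $\det(A_k - \mu) \sim -\lambda_k$ grows linearly in $k$, while the apparently-large off-diagonal entry $\lambda_k$ in $A_k$ is exactly compensated by the weight mismatch between the two components of $W_k$, so the calculation reduces to an elementary bookkeeping of powers of $(1+\lambda_k)^{1/2}$.
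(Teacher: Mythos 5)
Your computation of the eigenvalues is correct and reaches the same set as the paper, but the paper takes a shorter route that avoids the technical obstacle you flag at the end. Since $A$ has compact resolvent (Lemma~\ref{lem:compact}), its spectrum consists entirely of eigenvalues, so it is enough to determine the set of eigenvalues. The paper does this by writing out $A(f,g)=\nu(f,g)$ directly (after reducing to $\alpha=0$): eliminate $g=\nu f$ from the first equation, substitute into the second to get $-\Delta_{S^{n-1}}f = \nu(\nu+n-2)f$, and conclude that $\nu$ is an eigenvalue of $A$ if and only if $\nu(\nu+n-2)$ lies in $\{l(l+n-2):l\ge 0\}$, which gives $\nu=l$ or $\nu=2-n-l$. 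Your block-decomposition reaches the same eigenvalues, but you then set out to show that every $\mu\notin\alpha+\Sigma(n)$ is in the resolvent set by assembling the blockwise inverses $(A_k-\mu)^{-1}$ with uniform-in-$k$ bounds in the weighted norm of $\cH$. That extra step is unnecessary once compact resolvent has been invoked: because the spectrum is purely point spectrum, and because $A$ commutes with the spectral projections of $-\Delta_{S^{n-1}}$, any eigenvector of $A$ has a nonzero component in some $W_k$ that is itself an eigenvector of $A_k$, so the eigenvalue set is exactly $\bigcup_k\sigma(A_k)=\alpha+\Sigma(n)$ and no resolvent estimate is needed. There is also a small logical slip: your sentence ``since the spectrum is discrete, it suffices to produce a bounded inverse'' has the implication backwards --- a resolvent estimate is what one would need absent any discreteness information, whereas knowing that the spectrum consists only of eigenvalues is precisely what lets you skip that step.
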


\begin{proof}
It suffices to show that the spectrum is $\Sigma(n)$ when $\alpha=0$. Since $A$ has compact resolvent, the spectrum is discrete and contains only eigenvalues. For $\alpha=0$ the eigenvalue equation is 
\[
	\begin{pmatrix} 0 & 1 \\ -\Delta_{S^{n-1}} \ &\  2-n \end{pmatrix} \begin{pmatrix} f \\ g \end{pmatrix} = 
	\nu  \begin{pmatrix} f \\ g \end{pmatrix}
\]
hence $g = \nu f$ and $-\Delta_{S^{n-1}} f + (2-n)g = \nu g$, which we combine to obtain
\[
	-\Delta_{S^{n-1}} f = \nu(\nu + n-2) f.
\]
The distinct eigenvalues of $-\Delta_{S^{n-1}}$ are of the form $l(l + n-2)$ for $l \in \bbN \cup\{0\}$. Setting $\nu(\nu+n-2) = l(l + n-2)$, we obtain $\nu = l, 2-n-l$ as claimed.
\end{proof}

Finally, we prove a resolvent estimate for $A$.

\begin{lemma}\label{lem:resolvent}
For $-\alpha \notin \Sigma(n)$ there exists $C>0$ such that
\begin{align}\label{eqn:resolvent}
	\big\| (A - i\mu)^{-1} \big\|_{B(\cH)} \leq \frac{C}{1 + |\mu|}
\end{align}
for all $\mu \in \bbR$.
\end{lemma}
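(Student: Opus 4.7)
The plan is to diagonalize $A$ simultaneously with $-\Delta_{S^{n-1}}$, reducing the bound to a uniform family of two-by-two resolvent estimates, and then exploit the factorization of the characteristic polynomial already used in the proof of Lemma \ref{lem:spectrum}. Let $(\phi_k)$ be an orthonormal eigenbasis of $-\Delta_{S^{n-1}}$, with eigenvalues $\lambda_k = l_k(l_k + n - 2)$, $l_k \in \bbN \cup \{0\}$. Expanding $(f,g) = \sum_k (f_k,g_k)\phi_k$, \eqref{Hsequiv} gives
\begin{align*}
    \|(f,g)\|_\cH^2 = \sum_k \bigl[(1+\lambda_k)^{1/2}|f_k|^2 + (1+\lambda_k)^{-1/2}|g_k|^2\bigr],
\end{align*}
each two-dimensional subspace $\mathrm{span}(\phi_k)\oplus \mathrm{span}(\phi_k)$ is $A$-invariant, and $A$ acts on it through the matrix $A_k$ with rows $(\alpha, 1)$ and $(\lambda_k, \alpha + 2 - n)$. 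The isometric rescaling $\tilde f_k = (1+\lambda_k)^{1/4} f_k$, $\tilde g_k = (1+\lambda_k)^{-1/4} g_k$ makes the norm Euclidean on each block and conjugates $A_k$ to a matrix $\tilde A_k$ with rows $(\alpha, \sqrt{1+\lambda_k})$ and $(\lambda_k/\sqrt{1+\lambda_k}, \alpha + 2 - n)$. It therefore suffices to show $\|(\tilde A_k - i\mu)^{-1}\|_{\bbC^2} \leq C/(1+|\mu|)$ uniformly in $k$ and $\mu$.

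The key algebraic observation is the factorization
\begin{align*}
    \det(\tilde A_k - i\mu) = (\alpha - i\mu)(\alpha + 2 - n - i\mu) - \lambda_k = (\alpha + l_k - i\mu)(\alpha + 2 - n - l_k - i\mu),
\end{align*}
which is implicit in the spectral computation of Lemma \ref{lem:spectrum}. Since both $l_k$ and $2 - n - l_k$ lie in $\Sigma(n)$ for every $l_k \in \bbN \cup \{0\}$, the hypothesis $-\alpha \notin \Sigma(n)$ gives $\delta := \dist(-\alpha, \Sigma(n)) > 0$ and therefore
\begin{align*}
    |\alpha + l_k - i\mu|^2 \geq \delta^2 + \mu^2, \qquad |\alpha + 2 - n - l_k - i\mu|^2 \geq \delta^2 + \mu^2
\end{align*}
for all $k$ and all $\mu \in \bbR$. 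Moreover, the two real numbers $\alpha + l_k$ and $\alpha + 2 - n - l_k$ differ by $2l_k + n - 2$, so at least one of them has absolute value $\geq l_k + (n-2)/2$, which for $l_k \geq 1$ is comparable to $\sqrt{1 + \lambda_k}$. Combining this with the trivial bound $\delta^2 + \mu^2 \geq c(1+\mu^2)$ yields
\begin{align*}
    |\det(\tilde A_k - i\mu)|^2 \;\geq\; c_0\,(1+\mu^2)(1+\mu^2+\lambda_k)
\end{align*}
for all $k$, with $c_0$ depending only on $\delta$ and $n$; the $l_k=0$ case is handled separately since then $1+\mu^2+\lambda_k = 1+\mu^2$.

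To conclude, Cramer's rule expresses $(\tilde A_k - i\mu)^{-1}$ as the cofactor matrix divided by the determinant. The diagonal cofactors are $O(1+|\mu|)$ and the off-diagonal ones are $O(\sqrt{1+\lambda_k})$, so the Frobenius norm of the cofactor matrix is at most $C\sqrt{1+\mu^2+\lambda_k}$. Dividing by the determinant bound gives $\|(\tilde A_k - i\mu)^{-1}\|_{\bbC^2} \leq C/\sqrt{1+\mu^2}$ uniformly in $k$, and reassembling the blocks yields \eqref{eqn:resolvent}. The only substantive step is the determinant lower bound, and the factorization above reduces it cleanly to the strict positivity of $\dist(-\alpha, \Sigma(n))$ together with the growth of one factor linearly in $l_k$.
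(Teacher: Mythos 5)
Your proof is correct and takes a genuinely different route from the paper's. The paper first observes that $A - i\mu$ is boundedly invertible for all $\mu \in \bbR$ (from Lemma \ref{lem:spectrum} and compactness of the resolvent), so it only needs to establish the estimate for large $|\mu|$; it then reduces to the operator $A_0$ of \eqref{A0def} via a Neumann-series perturbation argument, and finally bounds the four block entries of the explicit formula \eqref{A0resolvent} in terms of the operator $D(\mu) = -\Delta_{S^{n-1}} + \mu^2$ acting between various Sobolev spaces. You instead block-diagonalize $A$ directly over the spherical harmonic decomposition, use an isometric rescaling so that the $\cH$-norm becomes Euclidean on each $2\times 2$ block, and then exploit the factorization $\det(\tilde A_k - i\mu) = (\alpha + l_k - i\mu)(\alpha + 2 - n - l_k - i\mu)$ together with $\delta := \dist(-\alpha,\Sigma(n)) > 0$ to get a uniform lower bound on the determinant; Cramer's rule then closes the argument. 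Your approach is more explicit and handles all $\mu$ in a single uniform estimate without a separate small-$\mu$ case, at the cost of requiring the precise form of the spectrum; the paper's two-step perturbative reduction to $A_0$ is more modular and would survive if the spectrum of $A$ were less explicit. Both are valid; your factorization is a clean shortcut that makes visible exactly how the hypothesis $-\alpha \notin \Sigma(n)$ enters.
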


\begin{proof}
From Lemma \ref{lem:spectrum}, the hypothesis on $\alpha$ guarantees $A-i\mu$ is boundedly invertible for any $\mu \in \bbR$, so we just need to prove that \eqref{eqn:resolvent} holds when $|\mu|$ is sufficiently large.

We next observe that it is enough to prove the estimate for the operator $A_0$ defined in \eqref{A0def}. If the estimate holds for $A_0$ we can choose $\mu$ large enough that $\| (A_0 - i\mu)^{-1}(A - A_0) \|_{B(\cH)} \leq 1/2$, since $A - A_0 \in B(\cH)$. This implies  $I + (A_0 - i\mu)^{-1}(A - A_0)$ is invertible, with
\[
	\big\| \big(I + (A_0 - i\mu)^{-1}(A - A_0)\big)^{-1} \big\|_{B(\cH)} \leq \sum_{k=0}^\infty \left(\frac{1}{2}\right)^k = 2.
\]
Writing
$
	A - i\mu = (A_0 - i\mu) \big(I + (A_0 - i\mu)^{-1}(A-A_0) \big),
$
we thus obtain $\|(A - i\mu)^{-1}\|_{B(\cH)} \leq 2C / (1 + |\mu|)$.

It remains to prove the resolvent estimate \eqref{eqn:resolvent} for $A_0$ when $|\mu|$ is large. The resolvent is given by \eqref{A0resolvent}. Therefore it suffices to prove the estimates
\begin{align*}
    \big\| D(\mu)^{-1} \big\|_{B(\Hm)} \leq \frac{C}{1 + \mu^2} \\
	\big\| D(\mu)^{-1} \big\|_{B(\Hp)} \leq \frac{C}{1 + \mu^2} \\
	\big\| D(\mu)^{-1} \big\|_{B(\Hm,\Hp)} \leq \frac{C}{1 + |\mu|} \\
	\big\| D(\mu)^{-1} \big\|_{B(\Hp,\Ht)} \leq \frac{C}{1 + |\mu|}
\end{align*}
for sufficiently large $|\mu|$.

Letting $(\lambda_k)$ denote the eigenvalues of $-\Delta_{S^{n-1}}$, and $(\phi_k)$ the corresponding eigenfunctions, we can compute the $H^s$ norm of $f$ by
\begin{align}
	\|f\|^2_{H^s(S^{n-1})} = \sum_k (1+\lambda_k)^s |c_k|^2,
\end{align}
where $c_k = \left<f, \phi_k\right>$. For smooth $f$ we have
\begin{align*}
	\big\| D(\mu) f \big\|^2_{H^s(S^{n-1})} = \sum_k (1+\lambda_k)^s (\lambda_k + \mu^2)^2 |c_k|^2.
\end{align*}
Using the inequality $ (\lambda_k + \mu^2)^2 \geq \mu^4$, we obtain
\begin{align*}
	\big\| D(\mu) f \big\|^2_{H^s(S^{n-1})}
	\geq \mu^4 \sum_k (1 + \lambda_k)^s |c_k|^2 = \mu^4 \| f \|_{H^s(S^{n-1})}^2.
\end{align*}
Similarly, assuming without loss of generality that $|\mu| \geq 1$, we find that
\[
    (\lambda_k + \mu^2)^2 = \lambda_k^2 + 2 \lambda_k \mu^2 + \mu^4  \geq \lambda_k \mu^2 + \mu^2 = \mu^2 (1 + \lambda_k) \geq \frac12 (1 + |\mu|)^2(1 + \lambda_k)
\]
and hence
\begin{align*}
	\big\| D(\mu) f \big\|^2_{H^s(S^{n-1})} 
	\geq \frac12 (1 + |\mu|)^2 \sum_k (1+\lambda_k)^{s+1} |c_k|^2 = \frac12 (1 + |\mu|)^2 \| f \|_{H^{s+1}(S^{n-1})}^2,
\end{align*}
which completes the proof.
\end{proof}

\subsection{The perturbation}
We now establish the required continuity and decay properties of the perturbation $B$.

\begin{lemma}\label{lem:pert}
$B(\cdot) \in C^{0,\gamma}\big((-\infty,0],B(\cH^\beta,\cH)\big)$ and $\|B(\tau)\|_{B(\cH^\beta,\cH)} \leq C e^{2\tau}$ for $\tau \leq 0$.
\end{lemma}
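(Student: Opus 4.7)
The plan is to treat both assertions by regarding $B(\tau)$ essentially as multiplication by the scalar function $e^{2\tau} V_{e^\tau}$ acting on the first component, and reducing each estimate to (i) a pointwise/Hölder bound on $V$ coming from the hypothesis $V \in C^{0,\gamma}(B_1)$, combined with (ii) elementary Sobolev embeddings on $S^{n-1}$. No PDE machinery is needed beyond what is already set up in the paper.

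For the norm bound I observe that $B(\tau)(\wtf,\wtg) = (0,\, e^{2\tau} V_{e^\tau}\wtf)$, so it suffices to estimate $\|V_{e^\tau}\wtf\|_{H^{-1/2}(S^{n-1})}$ in terms of $\|\wtf\|_{H^{1/2+\beta}(S^{n-1})}$. Using the chain of embeddings $H^{1/2+\beta}(S^{n-1})\hookrightarrow L^2(S^{n-1})\hookrightarrow H^{-1/2}(S^{n-1})$ (valid for all $\beta\ge 0$) reduces the task to showing that multiplication by $V_{e^\tau}$ is bounded on $L^2(S^{n-1})$. This is immediate because $V\in C^{0,\gamma}(\overline{B_1})$ gives the $\tau$-independent bound $\|V_{e^\tau}\|_{L^\infty(S^{n-1})}\le \|V\|_{L^\infty(B_1)}$. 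Multiplying through by $e^{2\tau}$ yields $\|B(\tau)\|_{B(\cH^\beta,\cH)} \le C e^{2\tau}$.

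For the Hölder regularity I decompose
\[
e^{2\tau_1} V_{e^{\tau_1}} - e^{2\tau_2} V_{e^{\tau_2}} = e^{2\tau_1}\bigl(V_{e^{\tau_1}} - V_{e^{\tau_2}}\bigr) + \bigl(e^{2\tau_1} - e^{2\tau_2}\bigr) V_{e^{\tau_2}}.
\]
The first term I control using the Hölder regularity of $V$ in the radial variable,
\[
\|V_{e^{\tau_1}} - V_{e^{\tau_2}}\|_{L^\infty(S^{n-1})} \le [V]_{C^{0,\gamma}}\, |e^{\tau_1}-e^{\tau_2}|^\gamma \le [V]_{C^{0,\gamma}}\, |\tau_1-\tau_2|^\gamma,
\]
where I used the mean value theorem together with $e^\theta \le 1$ on $(-\infty,0]$. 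The second term is even Lipschitz in $\tau$, since $|e^{2\tau_1}-e^{2\tau_2}|\le 2|\tau_1-\tau_2|$ on $(-\infty,0]$ and $V_{e^{\tau_2}}$ is uniformly bounded. Applying the same $L^\infty \cdot L^2 \subset L^2$ argument as above then controls the operator norm of the difference.

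The only delicate point is to ensure the Hölder estimate is \emph{uniform} over the unbounded interval $(-\infty,0]$ — a naive use of the Lipschitz factor $|\tau_1-\tau_2|$ fails to be dominated by $|\tau_1-\tau_2|^\gamma$ for large separations. I handle this by splitting into two regimes. For $|\tau_1-\tau_2|\le 1$, the bound $|\tau_1-\tau_2|\le |\tau_1-\tau_2|^\gamma$ together with the two estimates above gives $\|B(\tau_1)-B(\tau_2)\|_{B(\cH^\beta,\cH)} \le C |\tau_1-\tau_2|^\gamma$. For $|\tau_1-\tau_2|\ge 1$, I instead use the triangle inequality together with the already-established norm bound to obtain $\|B(\tau_1)-B(\tau_2)\|_{B(\cH^\beta,\cH)} \le 2C \le 2C|\tau_1-\tau_2|^\gamma$. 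Combining the two regimes yields the desired global Hölder continuity, completing the proof.
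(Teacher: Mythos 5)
Your proof is correct and follows essentially the same route as the paper: both reduce the operator-norm bounds to $\|V\|_{L^\infty}$ via Sobolev embeddings on $S^{n-1}$, decompose the difference $e^{2\tau_1}V_{e^{\tau_1}} - e^{2\tau_2}V_{e^{\tau_2}}$ by the triangle inequality, and exploit that $\tau \mapsto e^\tau$ is $1$-Lipschitz on $(-\infty,0]$. Your explicit case split at $|\tau_1 - \tau_2| = 1$ is harmless but unnecessary: since $t = e^\tau \in (0,1]$, the intermediate quantity $|t_1 - t_2|$ is already $\leq 1$, so $|t_1 - t_2| \leq |t_1 - t_2|^\gamma \leq |\tau_1 - \tau_2|^\gamma$ holds uniformly without the split.
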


\begin{proof}

From the definition of $B(\tau)$ in \eqref{Adecomp} we obtain
\begin{align*}
	\|B(\tau)\|_{B(\cH^\beta,\cH)} = t^2 \|V_t\|_{B(H^{1/2+\beta}(S^{n-1}),\Hm)},
\end{align*}
where $V_t$ denotes the operator on $H^{1/2+\beta}(S^{n-1})$ that is multiplication by $V_t$ followed by inclusion into $\Hm$. For any $f \in \Hp$ we have
\begin{align*}
	\| V_t f\|_{\Hm} = \sup_{g\neq0} \frac{|\left<V_t f,g\right>|}{\|g\|_{\Hp}} \leq \left( \sup_{\theta \in S^{n-1}} |V(t,\theta)| \right) \|f\|_{\Hm}
\end{align*}
and so
\begin{align*}
	\|V_t\|_{B(H^{1/2+\beta}(S^{n-1}),\Hm)} &= \sup_{f \neq 0} \frac{\|V_t f\|_{\Hm}}{\|f\|_{H^{1/2+\beta}(S^{n-1})}} \leq C \sup_{\theta \in S^{n-1}} |V(t,\theta)|,
\end{align*}
where $C$ depends on the norm of the embedding $H^{1/2+\beta}(S^{n-1}) \hookrightarrow \Hm$. This proves the claimed decay estimate for $B(\tau)$.

By the same argument we obtain
\begin{align*}
	\|B(\tau_1) - B(\tau_2)\|_{B(\cH^\beta,\cH)} \leq C \sup_{\theta \in S^{n-1}} \big|t_1^2 V(t_1,\theta) - t_2^2 V(t_2,\theta) \big|.
\end{align*}
For any $0 < t_1, t_2 \leq 1$ and $\theta \in S^{n-1}$ we compute
\begin{align*}
	\big|t_1^2 V(t_1,\theta) - t_2^2 V(t_2,\theta) \big| &\leq \left| t_1^2 - t_2^2 \right| |V(t_1,\theta)| + t_2^2 |V(t_1,\theta) - V(t_2,\theta)| \\
	&\leq 2 \left| t_1 - t_2 \right| |V(t_1,\theta)| + |V(t_1,\theta) - V(t_2,\theta)|
\end{align*}
and so $\|B(\tau_1) - B(\tau_2)\|_{B(\cH^\beta,\cH)} \leq C' \left| t_1 - t_2 \right|^\gamma$. The required estimate now follows from the fact that $|t_1 - t_2| = | e^{\tau_1} - e^{\tau_2} | \leq |\tau_1 - \tau_2|$ for all $\tau_1, \tau_2 \leq 0$.
\end{proof}

\subsection{Unique continuation}
We next prove a unique continuation result for the rescaled system \eqref{RSES} and its adjoint. Given the equivalence established in Theorem \ref{thm:ODEPDE}, this is an easy consequence of the unique continuation principle for elliptic equations; see, for instance \cite{BR12}.

\begin{lemma}\label{lem:UCP}
Suppose $(\wtf,\wtg)$ is a solution of \eqref{RSES} on $(-\infty,0)$. If $(\wtf(0),\wtg(0)) = 0$, then $(\wtf(\tau),\wtg(\tau)) = 0$ for all $\tau \leq 0$.
\end{lemma}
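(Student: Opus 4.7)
The plan is to reduce the statement to the classical unique continuation principle for the elliptic equation $\Delta u = Vu$ by undoing the scaling \eqref{scaling}. First, setting $t = e^\tau$, $f(t) = t^{-\alpha}\wtf(\log t)$ and $g(t) = t^{-1-\alpha}\wtg(\log t)$, one checks directly (reversing the derivation of \eqref{RSES}) that $(f,g)$ satisfies \eqref{SES} on $(0,1)$ with the regularity class \eqref{fgreg}, and the hypothesis $(\wtf(0),\wtg(0)) = 0$ becomes $f(1) = g(1) = 0$. It therefore suffices to prove $f \equiv g \equiv 0$ on $(0,1)$.

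Next, for an arbitrary $t_0 \in (0,1)$ I would assemble a PDE solution on the annular region $A_{t_0} = \{x \in \bbR^n : t_0 < |x| < 1\}$ by setting $u(r\theta) := f(r)(\theta)$. Because $A_{t_0}$ is bounded away from the origin, the polar-coordinate map is a smooth diffeomorphism onto $(t_0,1) \times S^{n-1}$; the regularity \eqref{fgreg} combined with \eqref{SES} then yields $u \in H^1(A_{t_0})$, with $\Delta u = Vu$ as a weak identity on $A_{t_0}$ and vanishing Cauchy data $u|_{|x|=1} = f(1) = 0$ and $\p_\nu u|_{|x|=1} = g(1) = 0$ on the outer sphere.

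Finally, I would apply a classical unique continuation theorem for second-order elliptic operators with H\"older continuous potentials (such as the one used in \cite{BR12}) to conclude $u \equiv 0$ on $A_{t_0}$. This gives $f \equiv g \equiv 0$ on $(t_0,1)$, and since $t_0 \in (0,1)$ was arbitrary, $f \equiv g \equiv 0$ on $(0,1)$; inverting \eqref{scaling} then yields $\wtf \equiv \wtg \equiv 0$ on $(-\infty,0]$.

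The main (and essentially the only) nontrivial point is verifying the ODE--PDE correspondence on $A_{t_0}$, but this should be considerably easier than the analogous step in Theorem \ref{thm:ODEPDE} because the coordinate singularity at $r=0$ has been excised; on $A_{t_0}$ the correspondence reduces to a direct chain-rule computation on the product domain $(t_0,1) \times S^{n-1}$, with no growth condition at the origin to check.
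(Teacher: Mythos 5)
Your proposal is correct and takes essentially the same route as the paper: undo the rescaling \eqref{scaling} to get a solution $(f,g)$ of \eqref{SES}, realize it via the ODE--PDE correspondence of \cite{BCJLS2} as the trace data of a weak solution $u$ of $\Delta u = Vu$ with vanishing Cauchy data on $\partial B_1$, and invoke elliptic unique continuation to conclude $u \equiv 0$. The only minor difference is that you work on an exhaustion by annuli $A_{t_0}$ rather than on $B_1 \setminus \{0\}$ directly as the paper does; both variants avoid the coordinate singularity and any growth condition at the origin.
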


\begin{proof}
Let $(f(t),g(t))$ denote the corresponding solution to \eqref{SES}, obtained by undoing the transformation \eqref{scaling}. Using the results of \cite{BCJLS2}, we can write $(f(t),g(t)) = \Tr_t u$, where $u \in H^1(B_1 \setminus\{0\})$ is a weak solution to $\Delta u = V u$. Then
\[
	\left.u\right|_{\p B_1} = f(1) = 0, \quad \left.\frac{\p u}{\p \nu}\right|_{\p B_1} = g(1) = 0
\]
and so $u$ must be identically zero. It follows that $f(t) = 0$ and $g(t) = 0$ for all $t \in (0,1]$, hence $\wtf(\tau)$ and $\wtg(\tau)$ vanish for $\tau \leq 0$.
\end{proof}

We also need a unique continuation result for the adjoint system
\begin{align}\label{adjoint}
    \frac{d}{d\tau} \begin{pmatrix} \wtf \\ \wtg \end{pmatrix} = \begin{pmatrix} -\alpha \  & \ \Delta_{S^{n-1}} - e^{2\tau} V_{e^\tau}  \\ -1 & n - 2 - \alpha \end{pmatrix} \begin{pmatrix} \wtf \\ \wtg \end{pmatrix}.
\end{align}
A direct calculation shows that $(f(t),g(t))$ satisfies \eqref{SES} if and only if the rescaled quantity
\begin{align}\label{rescale2}
     \begin{pmatrix} -e^{(n-1-\alpha)\tau} g(e^\tau) \\ e^{(n-2-\alpha)\tau} f(e^\tau) \end{pmatrix}
\end{align}
satisfies \eqref{adjoint}. Therefore, the adjoint system \eqref{adjoint} is also equivalent to the PDE \eqref{PDE}, in the sense of Theorem \ref{thm:ODEPDE}, and so the argument of Lemma \ref{lem:UCP} applies.

\subsection{Proof of Theorem \ref{thm:dichotomy} and Corollary \ref{cor:alphagap}}\label{sec:proofs}
Given Lemmas \ref{lem:compact}, \ref{lem:spectrum}, \ref{lem:resolvent}, \ref{lem:pert} and \ref{lem:UCP}, Theorem \ref{thm:dichotomy} is an immediate consequence of \cite[Theorem 1]{PSS97}. In fact, we are in the even better situation of \cite[Corollary 2]{PSS97}, which guarantees that $P^u(\tau)$ decays exponentially to the projection onto the unstable subspace for the autonomous operator $A$ as $\tau \to -\infty$.

To prove Corollary \ref{cor:alphagap}, suppose $0 < \alpha < n-2$, so the condition $-\eta^s < \alpha$ is satisfied for any $\eta^s \geq 0$. Moreover, the smallest positive eigenvalue of $A$ is $\alpha$, so we can choose any $\eta^u \in [0,\alpha)$. Therefore it suffices to choose $\eta^u \in (\alpha + 1 - n/2, \alpha)$. This interval is nonempty because $n>2$, and contains positive numbers because $\alpha >0$. This completes the proof of the corollary.

Finally, we prove the claim that no such $\alpha$ exists when $n=2$. To see this, let $-\alpha \notin \Sigma(2) = \bbZ$, so $\alpha + k \in (0,1)$ for some $k \in \bbZ$. The growth and decay rates must satisfy
\[
	0 \leq \eta^u < \alpha + k, \quad 0 \leq \eta^s < 1 - \alpha - k.
\]
Assuming \eqref{alpharange} holds with $n=2$, the condition $\alpha < \eta^u$ implies $k \geq 1$, hence $\eta^s < 1 - \alpha - k \leq -\alpha$, which contradicts the other inequality in \eqref{alpharange}.

As mentioned in the introduction, the non-existence of suitable $\alpha$ for $n=2$ is related to the spectrum of the asymptotic operator $A$. When $\alpha=0$ the spectrum is given by the set $\Sigma(n)$ defined in \eqref{Aspec}. Note that $0$ is always an eigenvalue of $A$, corresponding to the space of constant functions. When $n>2$ the eigenvalue $2-n$ corresponds to the fundamental solution $r^{2-n}$, which is singular at the origin. The exponential dichotomy distinguishes between these solutions provided $\alpha \in (0,n-2)$; this is precisely the content of Corollary \ref{cor:alphagap}. On the other hand, when $n=2$ the eigenvalue $0$ is repeated, on account of the harmonic function $\log r$, which blows up at the origin at a slower rate than any polynomial, in the sense that $r^\alpha \log r \to 0$ as $r \to 0$ for any $\alpha>0$.

\section{Dichotomy subspaces and spherical harmonics}\label{sec:harmonic}
We illustrate the results of the previous section for harmonic functions on $\bbR^3$. In this case $V=0$, so \eqref{RSES} becomes
\begin{align}\label{hSES}
	\frac{d}{d\tau} \begin{pmatrix} \wtf \\ \wtg \end{pmatrix} = 
	\begin{pmatrix} \alpha & 1\\ - \Delta_{S^2}  \ \ & \ \ \alpha -1 \end{pmatrix} \begin{pmatrix} \wtf \\ \wtg \end{pmatrix}.
\end{align}
In particular, $B(\tau) = 0$, so we are in the simpler case of \cite[Lemma 2.1]{PSS97}, which guarantees the existence of a dichotomy for \eqref{RSES} on the entire real line, with $\tau$-independent projections $P^s$ and $P^u$.

\subsection{The dichotomy subspaces}
From Lemma \ref{lem:spectrum} the eigenvalues of $A$ are
\[
	\nu^+_l = \alpha + l, \quad \nu^-_l = \alpha - l - 1,
\]
for $l=0,1,2,\ldots$. Each $\nu_l^\pm$ has multiplicity $2l+1$. The eigenfunctions can be expressed in terms of spherical harmonics $Y^m_l$ as
\begin{align*}
	\begin{pmatrix} \wtf^+_{lm}(\tau) \\ \wtg^+_{lm}(\tau)\big) \end{pmatrix} = e^{(\alpha + l) \tau } Y^m_l \begin{pmatrix} 1 \\ l \end{pmatrix}, \quad
	\begin{pmatrix} \wtf^-_{lm}(\tau) \\ \wtg^-_{lm}(\tau)\big) \end{pmatrix} = e^{(\alpha - l - 1) \tau} Y^m_l \begin{pmatrix} 1 \\  -(l+1) \end{pmatrix}
\end{align*}
for $-l \leq m \leq l$, and so
\begin{align*}
	\begin{pmatrix} f^+_{lm}(t) \\ g^+_{lm}(t) \end{pmatrix} = t^l Y^m_l \begin{pmatrix} 1 \\ l/t \end{pmatrix}, \quad
	\begin{pmatrix} f^-_{lm}(t) \\ g^-_{lm}(t) \end{pmatrix} = t^{-l-1} Y^m_l \begin{pmatrix} 1\\ -(l+1)/t \end{pmatrix}.
\end{align*}
Note that $\big(f^+_{lm}(t), g^+_{lm}(t) \big)$ is the boundary data of the harmonic function $u(r,\theta,\phi) = r^l Y^m_l(\theta,\phi)$ on the surface $\{|x|=t\}$, and $\big(f^-_{lm}(t), g^-_{lm}(t) \big)$ is the boundary data of $u(r,\theta,\phi) = r^{-l-1} Y^m_l(\theta,\phi)$. Solutions corresponding to $\nu^+_l$ are bounded at the origin and blow up at infinity, whereas solutions corresponding to $\nu^-_l$ blow up at the origin and decay to zero at infinity.

The unstable subspace $\widetilde E^u(\tau)$ is spanned by the eigenfunctions for which the corresponding eigenvalue $\nu_l^\pm$ is positive, and similarly for the stable subspace $\widetilde E^s(\tau)$. For any $\alpha \in (0,1)$ we have $\nu^-_0 < 0 < \nu^+_0$, and hence $\nu^-_l < 0 < \nu^+_l$ for all $l$. Therefore, for any such $\alpha$, $E^u(t)$ is precisely the set of boundary data of harmonic functions that are bounded at the origin, as was shown more generally in Corollary~\ref{cor:alphagap}.

\subsection{The dichotomy projections}
We assume the spherical harmonics $Y_l^m$ are normalized so that $\left<Y_l^m, Y_k^n\right> = \delta_{mn} \delta_{lk}$, 
where $\left<\cdot,\cdot\right>$ denotes the $L^2(S^2)$ inner product:
\[
 \langle f,g \rangle = \int_0^{2\pi} \int_0^\pi f(\theta, \phi) g(\theta, \phi) \sin\theta \,d\theta d\phi.
\]
Expanding $z = (z_1,z_2) \in \cH$ as
\[
    z = \sum_{l=0}^\infty \sum_{m=-l}^l \left( A_{lm} \begin{pmatrix} 1 \\ l \end{pmatrix} + B_{lm} \begin{pmatrix} 1 \\ -(l+1) \end{pmatrix} \right) Y_l^m,
\]
we find that
\[
    A_{lm} = \frac{1}{2l+1} \left<(l+1)z_1 + z_2, Y^m_l \right>, \quad
    B_{lm} = \frac{1}{2l+1} \left<lz_1 - z_2, Y^m_l \right>,
\]
and so the dichotomy projections are given by
\begin{align}
    P^u z &= \sum_{l=0}^\infty \sum_{m=-l}^l Y_l^m \frac{1}{2l+1} \left<(l+1)z_1 + z_2, Y^m_l \right> \begin{pmatrix} 1 \\ l \end{pmatrix}, \\
    P^s z &= \sum_{l=0}^\infty \sum_{m=-l}^l Y_l^m \frac{1}{2l+1} \left<l z_1 - z_2, Y^m_l \right> \begin{pmatrix} 1 \\ -(l+1) \end{pmatrix}.
\end{align}

\subsection{The evolution operators}
We next give explicit formulas for the operators $\Phi^{s,u}(\tau,\tau_0)$ defined in \eqref{Phidef}.

For arbitrary $z \in \cH$, $\Phi^{u}(\tau,\tau_0)z$ must be of the form $\sum\sum C_{lm}  e^{(\alpha+l)\tau} \big(Y_l^m, l Y_l^m\big)$. Using the formula for $P^u$ obtained above, and the fact that $\Phi^u(\tau_0,\tau_0)z = P^u z$, we find that $C_{lm} = e^{-(\alpha+l)\tau_0} \left<(l+1)z_1 + z_2, Y_l^m\right> / (2l+1)$, and hence
\begin{align}\label{Phiu}
    \Phi^u(\tau,\tau_0)z = \sum_{l = 0}^\infty \sum_{m = -l}^l e^{(\alpha + l)(\tau-\tau_0)} Y^m_l \frac{1}{(2l+1)}\langle (1+l)z_1 + z_2, Y^m_l \rangle \begin{pmatrix} 1 \\ l \end{pmatrix}
\end{align}
for $\tau \leq \tau_0$. Similarly, we obtain
\begin{align}\label{Phis}
    \Phi^s(\tau,\tau_0)z = \sum_{l = 0}^\infty \sum_{m = -l}^l e^{(\alpha - l - 1)(\tau-\tau_0)} Y^m_l \frac{1}{(2l+1)}\langle l z_1 - z_2, Y^m_l \rangle \begin{pmatrix} 1 \\ -(l+1) \end{pmatrix}
\end{align}
for $\tau \geq \tau_0$.

\subsection{Liouville-type theorems}
Since \eqref{hSES} is autonomous, the exponential dichotomy exists on the entire real line; cf. Theorem \ref{thm:dichotomy} which only guarantees the existence of a half-line dichotomy. Therefore, \cite[Theorem 2]{PSS97} says that the only bounded solution to \eqref{hSES} is $(\wtf(\cdot), \wtg(\cdot)) = (0,0)$. Using this, we obtain the following Liouville-type result, which rules out the existence of slowly-growing harmonic functions.

\begin{corollary}
Suppose $u$ is an entire harmonic function on $\bbR^n$. If $\|u\|_{H^1(\Omega_t)} \leq C t^r$ for some $r < n/2 - 1$, then $u$ is identically zero.
\end{corollary}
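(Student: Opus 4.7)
The plan is to use the ODE--PDE correspondence to represent $u$ as a solution of the autonomous system \eqref{hSES} on all of $\bbR$, choose the rescaling parameter $\alpha$ so that the corresponding trace $\wth$ is bounded, and then apply the Liouville-type statement \cite[Theorem 2]{PSS97} mentioned just above the corollary, which forces any bounded solution of a system admitting a full-line exponential dichotomy to vanish identically. Because \eqref{hSES} is autonomous, that dichotomy is available on all of $\bbR$ as soon as $-\alpha \notin \Sigma(n)$.

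Concretely, I would set $(f(t), g(t)) := \Tr_t u$ and define $\wth(\tau) := (\wtf(\tau), \wtg(\tau))$ through \eqref{scaling}. By Theorem \ref{thm:ODEPDE} applied on every ball $B_T$, $\wth$ solves \eqref{hSES} on all of $\bbR$ and $\|f(t)\|_{\Hp} + \|g(t)\|_{\Hm}$ is bounded as $t \to 0^+$. To control $\wth$ as $\tau \to +\infty$ I would rescale via $u_t(x) := u(tx)$, which is harmonic on $B_1$ with $\Delta u_t = 0$, and apply the standard trace bound
\[
    \|u_t|_{S^{n-1}}\|_{\Hp} + \|\partial_\nu u_t|_{S^{n-1}}\|_{\Hm} \leq C \|u_t\|_{H^1(B_1)}.
\]
A change of variables identifies $u_t|_{S^{n-1}}$ with $f(t)$ and $\partial_\nu u_t|_{S^{n-1}}$ with $t\, g(t)$, and a direct scaling computation gives $\|u_t\|_{H^1(B_1)} \leq C t^{1-n/2}\|u\|_{H^1(B_t)}$ for $t \geq 1$. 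Combining these with the hypothesis $\|u\|_{H^1(B_t)} \leq C t^r$ and undoing \eqref{scaling} yields
\[
    \|\wth(\tau)\|_{\cH} \leq C e^{(\alpha + 1 - n/2 + r)\tau} \quad (\tau \geq 0), \qquad \|\wth(\tau)\|_{\cH} \leq C e^{\alpha \tau} \quad (\tau \leq 0).
\]

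It remains to pick $\alpha$ so that both exponents are nonpositive and $-\alpha \notin \Sigma(n)$. The first condition reduces to $\alpha \leq n/2 - 1 - r$, whose right-hand side is strictly positive by the hypothesis $r < n/2 - 1$; any $\alpha \in (0, \min(1, n/2 - 1 - r))$ therefore works, since $-\alpha \in (-1, 0)$ is automatically non-integer and so lies outside $\Sigma(n) \subseteq \bbZ$ for every $n \geq 2$. With such an $\alpha$, $\wth$ is bounded on all of $\bbR$, hence identically zero by \cite[Theorem 2]{PSS97}; this forces $f \equiv 0$ and therefore $u \equiv 0$, completing the proof. The main technical obstacle is the trace step in the second paragraph: one has to translate the inequality furnished by the trace theorem on $B_1$ into the $\Hp \oplus \Hm$ norm on the sphere, and the negative-order piece in particular requires the equivalence \eqref{Hsequiv} and a careful accounting of how the dilation $x \mapsto tx$ interacts with fractional Sobolev norms.
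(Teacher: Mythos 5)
Your proposal is correct and follows essentially the same path as the paper: bound the rescaled trace $\wth$ on both half-lines, choose $\alpha \in (0, n/2-1-r)$ with $-\alpha \notin \Sigma(n)$, and invoke \cite[Theorem 2]{PSS97} on the full-line dichotomy for the autonomous system \eqref{hSES}. The only cosmetic differences are that you re-derive the large-$t$ trace scaling bound via the dilation $u_t(x) = u(tx)$ instead of citing it from \cite{BCJLS2}, and you control the small-$t$ side directly through the boundedness clause in Theorem \ref{thm:ODEPDE} rather than through the elliptic-regularity estimate $\|u\|_{H^1(\Omega_t)} \leq C t^{n/2}$; both give the same exponential bounds and the same conclusion.
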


\begin{proof}
From \cite{BCJLS2} we have the estimates
\begin{align*}
    \|f(t)\|_{\Hp} &\leq C t^{-n/2} \|u\|_{H^1(\Omega_t)}, \\
    \|g(t)\|_{\Hm} &\leq C t^{-n/2} \|u\|_{H^1(\Omega_t)},
\end{align*}
and hence
\begin{align*}
    \|\wtf(\tau)\|_{\Hp} &\leq C t^{\alpha-n/2} \|u\|_{H^1(\Omega_t)}, \\
    \|\wtg(\tau)\|_{\Hm} &\leq C t^{1 + \alpha-n/2} \|u\|_{H^1(\Omega_t)}.
\end{align*}
Choose a number $0 < \alpha < (n/2-1) - r$ with $-\alpha \notin \Sigma(n)$, so that Theorem \ref{thm:dichotomy} applies. It follows from elliptic regularity that $u$ and $\nabla u$ are uniformly bounded in a neighborhood of the origin, say $|u(x)|,|\nabla u(x)| \leq c$ for all $x \in \Omega_t$, with $t$ sufficiently small. Then
\[
    \|u\|_{H^1(\Omega_t)}^2 = \int_{\Omega_t} \big(|u|^2 + |\nabla u|^2 \big) \leq 2 c^2 |\Omega_t| = 2 c^2 \omega_n t^n,
\]
and so $\|u\|_{H^1(\Omega_t)} \leq C t^{n/2}$ for small $t$. Since $\alpha>0$, both $\|\wtf(\tau)\|_{\Hp}$ and $\|\wtg(\tau)\|_{\Hm}$ are thus bounded as $\tau \to -\infty$. On the other hand, the hypothesis $\|u\|_{H^1(\Omega_t)} \leq C t^r$ implies $$\|\wtg(\tau)\|_{\Hm} \leq C t^{1 + \alpha-n/2 + r}$$ is bounded as $\tau \to \infty$, since $1 + \alpha-n/2 + r<0$, and similarly for $\|\wtf(\tau)\|_{\Hp}$.
\end{proof}

\section{Applications}\label{sec:apply}
The previous sections gave a dynamical interpretation of the linear elliptic equation \eqref{PDE}, expanding on the results in \cite{BCJLS2} in the radial case. We conclude by presenting some applications of these ideas to linear and nonlinear PDE. In particular, we show that the presence (or absence) of unstable eigenvalues is encoded in the dichotomy subspaces, and demonstrate how the exponential dichotomy can be used to construct solutions to nonlinear equations on bounded and unbounded domains.

\subsection{Eigenvalue problems}

Here we use Corollary \ref{cor:alphagap} to give a dynamical interpretation of the eigenvalue problem
\begin{align}\label{eigenvalue}
	-\Delta u + V u = \lambda u
\end{align}
with Dirichlet boundary conditions. To do so we let $E^u(t)$ denote the unstable subspace corresponding to \eqref{eigenvalue}, with $\alpha$ chosen to satisfy the hypotheses of Theorem~\ref{thm:unstable}, and define the \emph{Dirichlet subspace}
\begin{align}\label{Dirsubspace}
	\cD = \{( 0,g) : g \in \Hm\} \subset \cH.
\end{align}

\begin{theorem}
$\lambda$ is an eigenvalue of the Dirichlet problem \eqref{eigenvalue} on $B_t$ if and only if the unstable subspace $E^u(t)$ intersects the Dirichlet subspace $\cD$ nontrivially. Moreover, the multiplicity of $\lambda$ equals $\dim \big(E^u(t) \cap \cD\big)$.
\end{theorem}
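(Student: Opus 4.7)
The plan is to reduce the claim to Theorem \ref{thm:unstable} by viewing \eqref{eigenvalue} as a linear equation of the form \eqref{PDE} with potential $V-\lambda$, so that $E^u(t)$ coincides with the trace space of weak solutions. The key observation is that an element of the Dirichlet subspace $\cD$ has vanishing first component, which, under the identification of $\Tr_t u$ with the pair $(u|_{\pOt}, \p_r u|_{\pOt})$, says exactly that $u$ vanishes on $\pOt$.

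First, rewrite \eqref{eigenvalue} as $\Delta u = \wti V u$ with $\wti V = V - \lambda$. Since $V \in C^{0,\gamma}(B_1)$ and is smooth near the origin, the same holds for $\wti V$, and Corollary \ref{cor:alphagap} lets us choose $\alpha$ so that Theorem \ref{thm:unstable} applies to $\wti V$. Writing $K_t^\lambda = \{u \in H^1(B_t) : -\Delta u + Vu = \lambda u\}$, this gives
\[
    E^u(t) = \Tr_t(K_t^\lambda).
\]

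Second, introduce the linear map $\Psi \colon \mathcal E_\lambda \to E^u(t) \cap \cD$ on the Dirichlet eigenspace
\[
    \mathcal E_\lambda = \{u \in H^1_0(B_t) : -\Delta u + Vu = \lambda u \}
\]
by $\Psi(u) = \Tr_t u = (0, \p_r u|_{\pOt})$. This is well defined: any $u \in \mathcal E_\lambda$ is a weak solution in $K_t^\lambda$, so $\Tr_t u \in E^u(t)$, and $u|_{\pOt}=0$ places $\Tr_t u$ in $\cD$. For surjectivity, any $(0,g) \in E^u(t) \cap \cD$ equals $\Tr_t u$ for some $u \in K_t^\lambda$, and the vanishing of the first component of the trace gives $u \in H^1_0(B_t)$, hence $u \in \mathcal E_\lambda$ with $\Psi(u) = (0,g)$. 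Injectivity follows from the unique continuation principle (cf.\ Lemma \ref{lem:UCP}): if $\Psi(u) = 0$ then both Cauchy data of $u$ vanish on $\pOt$, forcing $u \equiv 0$.

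Combining these, $\Psi$ is a linear isomorphism, so $\dim \mathcal E_\lambda = \dim(E^u(t) \cap \cD)$, which yields the equivalence and the multiplicity statement at once. The main things to pin down carefully are that Theorem \ref{thm:unstable} genuinely applies with the shifted potential $V-\lambda$ (this only requires the same regularity hypotheses, so no real difficulty arises) and that the first component of $\Tr_t u$ given by the construction in Section~1 does coincide with the ordinary Dirichlet trace on $\pOt$, so that membership in $\cD$ is exactly the condition $u|_{\pOt}=0$; this is immediate from the definition of $f(t) = u(t,\cdot)$ in polar coordinates.
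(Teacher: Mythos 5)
The paper does not actually spell out a proof of this theorem; it presents it as a direct application of Corollary~\ref{cor:alphagap}, pointing to \cite{CJLS16,CJM15} for the general principle. Your proof fills this gap and is exactly the argument the authors intend: shift the potential to $\wti V = V - \lambda$ (which preserves the H\"older and smoothness hypotheses, since the shift is by a constant), invoke Corollary~\ref{cor:alphagap} to obtain $E^u(t) = \Tr_t(K_t^\lambda)$, and then observe that the Dirichlet condition $u|_{\pOt}=0$ is precisely the condition that the first component of $\Tr_t u$ vanishes, i.e.\ that $\Tr_t u \in \cD$. The isomorphism $\Psi\colon \mathcal E_\lambda \to E^u(t)\cap\cD$, with injectivity coming from unique continuation (only one of the two Cauchy data vanishes automatically for $u\in H^1_0$, so the vanishing of $\p_\nu u|_{\pOt}$ is genuinely needed to conclude $u\equiv0$), gives both the equivalence and the equality of dimensions at once. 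All steps are sound; the only small point worth making explicit is that multiplicity here means $\dim\mathcal E_\lambda$ for the weak Dirichlet eigenvalue problem, which is what $\Psi$ measures.
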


Other boundary conditions (Neumann, Robin, etc.) can be characterized in a similar way by replacing $\cD$ accordingly; see \cite{CJLS16,CJM15} for details.

Therefore we have given a dynamical perspective on elliptic eigenvalue problems, similar to the Evans function \cite{S02}, which counts intersections between stable and unstable subspaces. This is also closely related to the Maslov index, a symplectic winding number that counts intersections of Lagrangian subspaces in a symplectic Hilbert space; see \cite{CJLS16,CJM15,DJ11,LS18}.

\subsection{Reformulation of two nonlinear problems}

In this section we illustrate how to reformulate equations of the form
\[
\Delta u - V(x) u = F(x, u),
\]
where $F$ is smooth with $F(x, 0) = D_u F(x, 0) = 0$, using the dichotomy constructed above. 

We emphasize that this approach allows for the construction of solutions that are not radially symmetric, even though spherical subdomains $\Omega_t = \{ |x| < t\}$ are used in constructing the dichotomy.

\subsubsection{A nonlinear boundary value problem}

First, we consider the case where $x \in B_T = \{x \in \bbR^n : |x| < T\}$, with some appropriate boundary condition:
\begin{align}\label{E:bounded-domain}
    \begin{split}
   \Delta u - V(x) u = F(x, u), \quad x \in B_T  \\
\big(u|_{\partial B_T},\p_\nu u|_{\partial B_T}\big) \in \mathcal{B}, 
    \end{split}
\end{align}
for some subspace $\mathcal{B} \subset \cH$. Using the framework introduced above, we write this as the equivalent spatial evolutionary system
\begin{equation}\label{E:SES-nonlinear}
\frac{d}{dt} \begin{pmatrix} f \\ g \end{pmatrix} = \begin{pmatrix} 0 & 1 \\ V(t, \theta) - t^{-2}\Delta_{S^{n-1}} \ \ & \ \ -t^{-1}(n-1)\end{pmatrix} \begin{pmatrix} f \\ g \end{pmatrix}  +  \begin{pmatrix} 0 \\ F(t, \theta, f) \end{pmatrix}. 
\end{equation}
Applying the change of variables used above, $t = e^\tau$, $\tilde f(\tau) = e^{\alpha \tau}f(e^\tau)$, $\tilde g(\tau) = e^{(\alpha+1) \tau}g(e^\tau)$, we find
\begin{equation}\label{E:SES-nonlinear-rescaled}
\frac{d}{d\tau} \begin{pmatrix} \tilde f \\ \tilde g \end{pmatrix} = \begin{pmatrix} \alpha & 1 \\ e^{2\tau} V(e^\tau, \theta) - \Delta_{S^{n-1}} \ \ & \ \ \alpha + 2 -n \end{pmatrix} \begin{pmatrix} \tilde f \\ \tilde g \end{pmatrix}  +  \begin{pmatrix} 0 \\ e^{(\alpha+2)\tau} F(e^\tau, \theta, e^{-\alpha \tau} \tilde f) \end{pmatrix}. 
\end{equation}

It was shown above that for any $\beta \in [0,1)$ an exponential dichotomy exists in $\cH^\beta$ on the interval $(-\infty, \log T]$, for the linear evolution associated with the above system, as long as 
$-\alpha \notin \Sigma(n)$ and $V \in C^{0, \gamma}(\Omega)$, which we assume in this section. For notational convenience, write the above system as
\begin{equation} \label{E:compact-notation}
    \frac{d}{d\tau} \wth = \mathcal{A}(\tau)\wth + \mathcal{F}(\tau, \wth), \qquad \wth = \begin{pmatrix} \wtf \\ \wtg \end{pmatrix}, 
\end{equation} 
where
\[
\mathcal{A}(\tau) = \begin{pmatrix} \alpha & 1 \\ e^{2\tau} V(e^\tau, \theta) - \Delta_{S^{n-1}} \ \ & \ \ \alpha + 2 -n \end{pmatrix}, \quad \mathcal{F}(\tau, \wth) = \begin{pmatrix} 0 \\ e^{(\alpha+2)\tau} F(e^\tau, \theta, e^{-\alpha \tau} \tilde f) \end{pmatrix},
\]
and we have notationally suppressed any $\theta$-dependence. With a suitable assumption on the nonlinearity $F$, any solution to \eqref{E:compact-notation} that is bounded as $\tau \to -\infty$ can be written in terms of the operators $\Phi^{s,u}$ defined in \eqref{Phidef} as
\begin{equation}\label{E:soln-minus-infty}
    \wth(\tau) = \Phi^u(\tau, \log T) \wth_* + \int_{-\infty}^\tau \Phi^s(\tau, \rho)\mathcal{F}(\rho, \wth(\rho)) d \rho + \int_{\log T}^\tau \Phi^u(\tau, \rho) \mathcal{F}(\rho, \wth(\rho)) d \rho
\end{equation}
for some $\wth_* \in \cH^\beta$. For instance, it is sufficient to have $\mathcal F \in C^{1,1} \big((-\infty,\log T] \times \cH^\beta,\cH \big)$, which is equivalent to requiring that the map $(\tau, \tilde f) \mapsto e^{(\alpha+2)\tau} F(e^\tau, \theta, e^{-\alpha \tau} \tilde f)$ is in $C^{1,1} \big((-\infty,\log T] \times H^{1/2 + \beta}(S^{n-1}),\Hm \big)$; see \cite[p. 294]{PSS97}.

Using the fact that 
\[
\frac{d}{d\tau} \Phi^{s,u}(\tau, \rho) = \mathcal{A}(\tau)\Phi^{s,u}(\tau, \rho), \qquad \Phi^s(\tau, \tau) + \Phi^u(\tau, \tau) = \mathrm{Id}, 
\]
once can directly check that $\wth(\tau)$ given in \eqref{E:soln-minus-infty} is indeed a solution of \eqref{E:compact-notation}. The exponential bounds for $\Phi^{s,u}(\tau, \rho)$ ensure that it is well-behaved as $\tau \to -\infty$. At the moment, $\wth_* \in \cH^\beta$ is arbitrary. However, we have not yet made reference to the boundary condition. We need
\begin{equation}\label{E:BC}
    \wth(\log T) = P^u(\log T) \wth_* + \int_{-\infty}^{\log T} \Phi^s(\log T, \rho)\mathcal{F}(\rho, \wth(\rho)) d \rho  \in \mathcal{B}.
\end{equation}
The idea is thus to choose $\wth_* \in \cH^\beta$ so that \eqref{E:BC} holds. Note that, since $\wth$ is defined implicitly via \eqref{E:soln-minus-infty}, the integral term in \eqref{E:BC} depends on the choice of $\wth_*$ through $\wth$. The best way to understand \eqref{E:BC} would depend on the details of the dichotomy and the boundary conditions. 

\subsubsection{A nonlinear problem on $\mathbb{R}^n$}

Next consider
\begin{equation}\label{E:unbounded-domain}
\Delta u - V(x) u = F(x, u), \qquad x \in \mathbb{R}^n.
\end{equation}
If we reformulate this as the evolutionary system \eqref{E:SES-nonlinear-rescaled}, then the linear part admits an exponential dichotomy on the negative half line $(-\infty,0]$, by Theorem \ref{thm:dichotomy}. We denote this by $\Phi^{s,u}_-$. Moreover, if $|x|^2 V(x) \to 0$ as $|x| \to \infty$, the proof of Theorem \ref{thm:dichotomy} also yields a dichotomy on the positive half line $[0,\infty)$, which we denote by $\Phi^{s,u}_+$. (When $V=0$ we have a dichotomy on the whole line, so $\Phi^u_\pm$ and $\Phi^s_\pm$ are given explicitly by \eqref{Phiu} and \eqref{Phis} for $n=3$, and can be expressed similarly for $n>3$.)

As in the previous section, with a suitable assumption on the nonlinearity $F$, bounded solutions on $(-\infty, 0]$ are given by
\begin{equation}\label{E:soln-minus-infty-2}
    \wth^-(\tau) = \Phi_-^u(\tau, 0) \wth_1 + \int_{-\infty}^\tau \Phi_-^s(\tau, \rho)\mathcal{F}(\rho, \wth^-(\rho)) d \rho + \int_{0}^\tau \Phi_-^u(\tau, \rho) \mathcal{F}(\rho, \wth^-(\rho)) d \rho
\end{equation}
and bounded solutions on $[0, \infty)$ are given by
\begin{equation}\label{E:soln-plus-infty-2}
    \wth^+(\tau) = \Phi_+^s(\tau, 0) \wth_2 + \int_{+\infty}^\tau \Phi_+^u(\tau, \rho)\mathcal{F}(\rho, \wth^+(\rho)) d \rho + \int_{0}^\tau \Phi_+^s(\tau, \rho) \mathcal{F}(\rho, \wth^+(\rho)) d \rho,
\end{equation}
where $\wth_{1,2} \in \cH^\beta$ are, for the moment, arbitrary.

To find a solution to \eqref{E:SES-nonlinear-rescaled} that is bounded for all $\tau \in \mathbb{R}$, we must match \eqref{E:soln-minus-infty-2} and \eqref{E:soln-plus-infty-2} at $\tau = 0$. This leads to the matching condition
\begin{eqnarray*}
    0 &=& \wth^+(0) - \wth^-(0) \\ 
    &=& P_-^u(0)\wth_1+ \int_{-\infty}^{0} \Phi_-^s(0, \rho)\mathcal{F}(\rho, \wth^-(\rho)) d \rho \\
    &&\qquad - P_+^s(0) \wth_2 -  \int_{+\infty}^{0} \Phi_+^u(0, \rho)\mathcal{F}(\rho, \wth^+(\rho)) d \rho.
\end{eqnarray*}
Similar to the previous example, the best way to understand this matching condition depends on the details of the nonlinearity. In the $V=0$ case one has the advantage of having an explicit formula for the dichotomy and the projection operators.

\begin{acknowledgement}
The authors would like to acknowledge the support of the American Institute of Mathematics and the Banff International Research Station, where much of this work was carried out. M.B. acknowledges the support of NSF grant DMS-1411460 and of an AMS Birman Fellowship. G.C. acknowledges the support of NSERC grant RGPIN-2017-04259. C.J. was supported by ONR grant N00014-18-1-2204. Y.L. was supported by the NSF grant DMS-1710989, by the Research Board and Research Council of the University of Missouri, and by the Simons Foundation. A.S. was supported by NSF grant DMS-1910820.
\end{acknowledgement}

\bibliographystyle{spmpsci}
\bibliography{proceedings.bib}

\end{document}